  \crefname{theorem}{Theorem}{Theorems}
  \crefname{thm}{Theorem}{Theorems}
  \crefname{lemma}{Lemma}{Lemmas}
  \crefname{lem}{Lemma}{Lemmas}
  \crefname{remark}{Remark}{Remarks}
  \crefname{prop}{Proposition}{Propositions}
\crefname{notation}{Notation}{Notations}
\crefname{claim}{Claim}{Claims}
  \crefname{defn}{Definition}{Definitions}
  \crefname{corollary}{Corollary}{Corollaries}
  \crefname{section}{Section}{Sections}
  \crefname{figure}{Figure}{Figures}
    \crefname{assumption}{Assumption}{Assumptions}
\newtheorem{thm}{Theorem}[section]
\newtheorem{lemma}[thm]{Lemma}
\newtheorem{corollary}[thm]{Corollary}
\newtheorem{prop}[thm]{Proposition}
\newtheorem{defn}[thm]{Definition}
\numberwithin{equation}{section}
\theoremstyle{definition}
\newtheorem{remark}[thm]{Remark}
\newcommand{\mb}[1]{\mathbb{#1}}
\newcommand{\grad}{\nabla}
\newcommand{\ind}{\mathds{1}}
\newcommand{\hcap}{\mathrm{hcap}}
\newcommand{\height}{\mathrm{height}}
\newcommand{\harm}{\mathrm{Harm}_{\mb H}}
\newcommand{\harmt}{\mathrm{Harm}_{H_t}}
\newcommand{\harmh}{\mathrm{Harm}_H}
\newcommand{\harmd}{\mathrm{Harm}_{D}}
\newcommand{\harmdi}{\mathrm{Harm}_{\mb D}}
\newcommand{\ipd}[2]{ \langle #1, \, #2 \rangle }
\newcommand{\dipd}[2]{ \ipd{#1}{#2}_\grad }
\def\cF{\mathcal{F}}
\def\cD{\mathcal{D}}
\def\cA{\mathcal{A}}
\def\P{\mathbb{P}}
\def\E{\mathbb{E}}
\def\C{\mathbb{C}}
\def\R{\mathbb{R}}
\def\H{\mathbb{H}}
\def  \p- {p\textunderscore}
\def\eps{\varepsilon}
\DeclareMathOperator{\var}{Var}
\DeclareMathOperator{\dist}{dist}
\title{The Rohde--Schramm theorem, via the Gaussian free field}
\author{Nathana\"el Berestycki\thanks{The first author's research was supported in part by EPSRC grants EP/L018896/1 and EP/I03372X/1. The second author was a PhD student while this work was taking place, funded by EPSRC grant EP/H023348/1. }  \and Henry Jackson$^*$}
\date{\today}
\begin{document}
\maketitle

\begin{abstract}
The Rohde--Schramm theorem states that Schramm--Loewner Evolution with parameter $\kappa$ (or SLE$_\kappa$ for short) exists as a random curve, almost surely, if $\kappa \neq 8$. Here we give a new and concise proof of the result, based on the Liouville quantum gravity coupling (or reverse coupling) with a Gaussian free field. This transforms the problem of estimating the derivative of the Loewner flow into estimating certain correlated Gaussian free fields. While the correlation between these fields is not easy to understand, a surprisingly simple argument allows us to recover a derivative exponent first obtained by Rohde and Schramm \cite{RohdeSchramm}, subsequently shown to be optimal by Lawler and Viklund \cite{LawlerViklund}, which then implies the Rohde--Schramm theorem.
\end{abstract}

%The first shown in \cite{RohdeSchramm}. Our proof uses the coupling of the reverse SLE with the Neumann boundary GFF to bound the derivative of the inverse of the Loewner flow close to the origin. We are able to write the absolute value of the derivative of the reverse flow in terms of the exponential of two Gaussian random fields; one of them is a Neumann boundary GFF and the other is very closely related. We can use our knowledge of the structure of the GFF to find bounds which ensure continuity of the SLE trace.

\section{Introduction}
%We will provide a new proof for the Rohde-Schramm theorem in the case $\kappa \neq 8$. The theorem roughly states that ``a Schramm-Loewner evolution is generated by a path''. It was first proved for $\kappa \neq 8$ in \cite{RohdeSchramm} and for $\kappa = 8$ in \cite{lawler2011conformal}.

\subsection{Main result}

We start by recalling some basic definitions and notations. Let $(\xi_t)_{t\ge 0}$ be a real-valued continuous function. Define the family of conformal maps $(g_t)_{t\ge 0}$ as the maximal solution to Loewner's equation:
\begin{equation}
	dg_t(z) = \frac{2}{g_t(z) - \xi_t}dt
	\label{rs:eq:loewner}
\end{equation}
for each $z \in \mb H: = \{ z \in \C: \Im(z) >0 \}$, the upper-half plane. The maximal solution is defined up to some maximal time $t= \zeta(z)$ such that that $|g_t(z) - \xi_t| \to 0$ as $t \to \zeta(z)$.
%and we call that time
%\begin{equation*}
%	\tau(z) = \inf\left\{ t > 0 \; : \; g_t(z) = \xi_t \right\}.
%\end{equation*}
Set
\begin{equation*}
	H_t = \left\{ z \in \mb H \; : \; t < \zeta(z) \right\}.
\end{equation*}
The complement of $H_t$ in the upper half plane, $K_t = \mb H \setminus H_t$, is a compact $\H$-hull. It is easy to see that $g_t$ is a conformal isomorphism from $H_t$ to $\H$, which maps out the hull $K_t$. Deterministic Loewner theory (see \cite{Ahlfors, Lawler-book, SLEnotes}) implies that $K_t$ is a growing sequence satisfying the local growth property; and $(\xi_t)_{t\ge 0}$ is called the Loewner transform or driving function of the hulls process $(K_t)_{t\ge 0}$.

%The driving function $(\xi_t)$ is called the Loewner transform of the hulls $(K_t)$.

\medskip We say that the hulls $(K_t)_{t\ge 0 }$ are \textbf{generated by a curve} if there exists a curve $(\gamma_t)_{t \ge 0} \subset \bar \H$ such that, for all $t > 0$, $H_t$ is the (unique) unbounded component of $\mb H \setminus \gamma[0,t]$. The following, which appears as Theorem 4.1 in \cite{RohdeSchramm}, gives a condition for the hulls $(K_t)_{t\ge 0}$ to be generated by a curve.
\begin{thm}
	Let $\xi:[0,\infty) \to \mb R$ be continuous, and let $g_t$ be the corresponding Loewner flow, i.e., the solution of Loewner's equation \eqref{rs:eq:loewner}. Assume that %]
	\begin{equation*}
		\gamma(t) := \lim_{y \to 0} g_t^{-1}(iy + \xi_t)
	\end{equation*}
	exists for all $t \in [0,\infty)$ and is continuous. Then $g_t^{-1}$ extends continuously to $\overline{\mb H}$ and $H_t$ is the unbounded connected component of $\mb H \setminus \gamma[0,t]$, for every $t \in [0,\infty)$.%]]
		\label{rs:thm:continuity}
\end{thm}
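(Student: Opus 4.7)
The plan is to invoke Carath\'eodory's theorem on continuous extensions of conformal maps: a conformal isomorphism $\phi: \H \to U$ onto a simply connected domain $U \subsetneq \C$ extends continuously to $\bar\H$ if and only if $\partial U$ is locally connected. Apply this with $\phi = g_t^{-1}$ for fixed $t > 0$, so the main task reduces to showing that $\partial H_t$ is locally connected.

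The boundary $\partial H_t$ decomposes into a portion of the real line, which is trivially locally connected, and $\partial K_t \cap \H$. Local connectivity of the whole will follow once I show that $\partial K_t \cap \H$ is contained in the image of a continuous curve, namely $\gamma[0,t]$. To formalise this I would introduce the map $\beta: [0,t] \to \bar\H$ defined by $\beta(s) := g_t(\gamma(s))$ for $s \in [0,t)$, interpreted as the appropriate prime-end limit since $\gamma(s)$ lies in the closure of $H_t$, together with $\beta(t) := \xi_t$. The crux is to show that $\beta$ is continuous. Granted this, a standard topological argument parametrises $\partial H_t$ piecewise by $\gamma[0,t]$ and the real line, yielding local connectivity. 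Carath\'eodory then provides the continuous extension of $g_t^{-1}$ to $\bar\H$, and identifying which real $u$'s map to which boundary points gives the description of $H_t$ as the unbounded component of $\H\setminus\gamma[0,t]$.

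To establish continuity of $\beta$, the key identity is $\xi_s = g_s(\gamma(s))$ by definition of $\gamma$, so
\begin{equation*}
\beta(s) \;=\; \bigl(g_t \circ g_s^{-1}\bigr)(\xi_s).
\end{equation*}
The composition $\phi_{s,t} := g_t \circ g_s^{-1}$ is itself a normalised Loewner map, satisfying Loewner's equation with the time-shifted driving function $r \mapsto \xi_{s+r}$ on $[0, t-s]$ and initial condition $\phi_{s,s}(w) = w$. Classical Loewner estimates then bound the diameter of the hull of $\phi_{s,t}$, and hence $|\phi_{s,t}(\xi_s) - \xi_t|$, in terms of $\sup_{s \le r \le t}|\xi_r - \xi_t|$ and $\sqrt{t-s}$; both quantities vanish as $s \uparrow t$ by continuity of $\xi$, yielding $\beta(s) \to \xi_t = \beta(t)$. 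Continuity of $\beta$ at interior points of $[0,t)$ is obtained by the same shifted-flow argument combined with the hypothesised continuity of $\gamma$.

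The main obstacle I expect is precisely the continuity of $\beta$: the points in question sit on the boundary of $H_t$ rather than its interior, so one cannot invoke continuity of $g_t$ on $H_t$ directly. Instead the argument must control $g_t$ applied to sequences approaching $\partial H_t$, which is exactly what the Loewner hull estimates provide. A secondary subtlety is verifying that the piecewise parametrisation obtained actually exhausts $\partial H_t$ and in the correct topological configuration, but this is a standard consequence of the Jordan curve and prime-end theory once the curve $\beta$ and its trace $\gamma[0,t]$ are in hand.
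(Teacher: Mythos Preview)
First, note that the paper does not actually prove this theorem: it is quoted from \cite[Theorem~4.1]{RohdeSchramm} as background for the main results, so there is no ``paper's own proof'' to compare against.

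Your overall plan---reduce to local connectivity of $\partial H_t$ and invoke Carath\'eodory---is the standard one and is indeed what Rohde and Schramm do. But the execution has a genuine gap. You assert that ``$\gamma(s)$ lies in the closure of $H_t$'' for all $s\in[0,t)$ and then define $\beta(s)=g_t(\gamma(s))$ as a prime-end value. This is false in general: if $\gamma$ makes a loop and then later encloses that loop inside a larger one (which is perfectly compatible with continuous $\xi$ and continuous $\gamma$; think of two nested excursions touching the real axis), the earlier arc sits in the \emph{interior} of $K_t$, not on $\partial H_t$, and $g_t$ has no value there in any sense. Your alternative formula $\beta(s)=(g_t\circ g_s^{-1})(\xi_s)$ has the same defect: $\xi_s$ need not be accessible from $g_s(H_t)=\mb H\setminus g_s(K_t\setminus K_s)$, so the boundary value is again undefined.

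The argument in \cite{RohdeSchramm} sidesteps this by establishing the containment $\partial H_t\cap\mb H\subset\gamma[0,t]$ directly: for $z\in\partial K_t\cap\mb H$ with $\tau=\zeta(z)$ one shows $z=\gamma(\tau)$, using that $g_s(z)-\xi_s\to 0$ as $s\uparrow\tau$ together with a length--area cross-cut estimate and, crucially, the assumed \emph{continuity} of $s\mapsto\gamma(s)$ rather than the mere pointwise existence of each radial limit. Once $\partial H_t\subset\gamma[0,t]\cup\mb R$ is known, Torhorst's theorem gives local connectivity (the boundary of any complementary component of a locally connected continuum is itself locally connected), and Carath\'eodory yields the continuous extension as you intended. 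So your strategy is right, but the parametrisation via $\beta$ is the wrong intermediate object.
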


By definition, the \textbf{(chordal) SLE$_\kappa$ process} is the growing process of hulls whose Lowener transform is given by $\xi_t = \sqrt \kappa B_t$, $t\ge 0$, where $\kappa > 0$ and $(B_t)_{t\ge 0}$ is a standard (linear) Brownian motion. We will assume throughout that $(\xi_t)_{t\ge 0}$ has this form. We also introduce the following notation:
\begin{equation}
	f_t := g_t^{-1} \qquad \text{ and } \qquad \hat f_t(z) := f_t(z + \xi_t) = g_t^{-1}(z+\xi_t).
	\label{rs:eq:fdefn}
\end{equation}

The celebrated \textbf{Rohde--Schramm theorem} \cite[Theorem 3.6]{RohdeSchramm}, shows that the assumption in Theorem \ref{rs:thm:continuity} is satisfied for $\kappa \neq 8$, and so SLE$_\kappa$ is generated by a curve for those values of $\kappa$:
\begin{thm}[Rohde--Schramm]\label{t:RS}
With the same notations as above, define
	\begin{equation*}
		H(y,t) := \hat f_t(iy) \quad \text{ for } \quad (y,t) \in (0,\infty) \times [0,\infty).
	\end{equation*}
	If $\kappa \neq 8$, then almost surely $H(y,t)$ extends continuously to $[0,\infty)\times[0,\infty)$. %]]]
	\label{rs:thm:hcts}
\end{thm}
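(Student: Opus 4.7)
The plan is to apply Theorem \ref{rs:thm:continuity}: it will suffice to show that almost surely, $H(y,t) = \hat f_t(iy)$ extends continuously to $[0,\infty) \times [0,\infty)$. A multi-scale chaining / Borel--Cantelli argument (using the Markov property of SLE in $t$ and the Koebe distortion theorem, to upgrade single-point bounds to bounds along segments) reduces this task to a one-point derivative moment estimate of the form
\begin{equation*}
	\E \bigl[ |\hat f_t'(iy)|^q \bigr] \;\leq\; C(T,q) \, y^{-\beta(q)}, \qquad y \in (0,1], \ t \in [0,T],
\end{equation*}
for a range of $q > 0$ with sufficiently small $\beta(q)$. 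This is exactly the derivative exponent of \cite{RohdeSchramm}, shown to be sharp in \cite{LawlerViklund}; once available, it yields H\"older control of $\hat f_t$ near the real axis and the required joint continuity.

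The main new input is a proof of this moment bound via the reverse Loewner coupling with a Gaussian free field. By the classical time-reversal identity, the law of $|\hat f_t'(iy)|$ at fixed $t$ coincides with that of $|\tilde f_t'(iy)|$, where $\tilde f_t$ denotes the centered reverse Loewner flow driven by the time-reversal of $\xi$. Sheffield's coupling asserts that for $Q = 2/\sqrt{\kappa} + \sqrt{\kappa}/2$ and a GFF $h$ on $\mb H$ (with suitable boundary data), the random distribution
\begin{equation*}
	\tilde h \;:=\; h \circ \tilde f_t + Q \log |\tilde f_t'|
\end{equation*}
is again a GFF with the same law as $h$. Pairing both sides with circle averages at $iy$ rewrites $Q\log |\tilde f_t'(iy)|$ as a difference $h_{y,t} - \tilde h_{y,t}$ of two such averages, each a centered Gaussian (up to a deterministic drift) with variance $-\log y + O(1)$.

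The principal obstacle is that $h_{y,t}$ and $\tilde h_{y,t}$, although marginally identically distributed Gaussians, are correlated in a complicated and implicit way, so that a direct variance computation of their difference is out of reach. The surprisingly simple observation is that the joint law is not needed at all: by Cauchy--Schwarz (or H\"older) applied at the level of exponential moments,
\begin{equation*}
	\E \bigl[ e^{(q/Q)(h_{y,t} - \tilde h_{y,t})} \bigr] \;\leq\; \E \bigl[ e^{2q\, h_{y,t}/Q} \bigr]^{1/2} \E \bigl[ e^{-2q\, \tilde h_{y,t}/Q} \bigr]^{1/2},
\end{equation*}
and each factor on the right is a pure Gaussian exponential moment, computed from the marginal variance alone. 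Optimizing the resulting $\beta(q)$ over $q$ recovers the Rohde--Schramm derivative exponent. The final check is that the optimized exponent remains strictly below the threshold required for Kolmogorov-type continuity of $H$ for every $\kappa \neq 8$, consistent with the single excluded value in Theorem \ref{rs:thm:hcts}.
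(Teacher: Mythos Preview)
Your high-level strategy---express $Q\log|f_t'|$ as a difference of two GFF-type quantities via the reverse coupling and then decouple them by H\"older---is the paper's strategy. The gap is in the sentence ``each a centered Gaussian \ldots\ with variance $-\log y + O(1)$'' and the claim that ``each factor on the right is a pure Gaussian exponential moment, computed from the marginal variance alone.''

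Only one of the two terms---the one coming from the field that has the law of $h_0$---is marginally Gaussian with variance of order $|\log y|$. The other is (a circle average of) $h\circ f_t$; marginally this is \emph{not} Gaussian (it is a mixture over the randomness of $f_t$), and conditionally on $f_t$ its variance depends on $\dist(f_t(iy),\partial H_t)$, not on $y$. If you nonetheless treat the two terms symmetrically, each with variance $\asymp |\log y|$, and apply Cauchy--Schwarz, the optimised tail exponent you obtain is $q/4$ rather than $q = 4/\kappa+\kappa/16+1$; this exceeds $2$ only for $\kappa \notin (0.57\ldots,111.4\ldots)$, so the argument fails throughout the interesting range.

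What makes the H\"older step essentially lossless in the paper is an asymmetry you have not used. First, since $\log|f_t'|$ is harmonic, circle averages are replaced by the harmonic extension of the boundary data; this yields $Q\log|f'_t(iy)| = \harm(h')(iy) - \harm(h\circ f_t)(iy) + (\text{deterministic}) + b_t$, where $b_t$ is a random constant arising because the coupling holds only modulo additive constants (your proposal omits $b_t$ entirely; controlling it requires a separate sub-exponential tail estimate, \cref{lem:bt}). Second, and crucially, one works on the event $\{|f'_t(iy)|>y^{-(1-\eps)}\}$: there Koebe's $1/4$ theorem forces $\dist(f_t(iy),\partial H_t)\gtrsim y^{\eps}$, and hence the conditional variance of $\harm(h\circ f_t)(iy)=\harmt(h)(f_t(iy))$ given $f_t$ is only $O(\eps|\log y|)$, not $O(|\log y|)$ (\cref{lem:variance.bound}). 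The second field therefore contributes essentially nothing after H\"older, and optimising over the remaining moment parameter recovers the full exponent $q$.
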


A natural approach for proving convergence of $\hat f_t(iy)$ as $y \to 0$ is to show that it is Cauchy and hence, together with fairly standard distortion estimates (see Lemma 4.32 in \cite{Lawler-book}), it is sufficient to control the derivative $\hat f_t'(iy)$. In particular, the heart of the proof of \cref{t:RS} is the following bound on the tail of $|\hat f'_t(iy)|$:

\begin{thm}\label{T:main}
	Let $\kappa \neq 8$, and let $\hat f_t $ be the centred inverse of the Loewner flow as defined in \eqref{rs:eq:fdefn}. Then for all $\eps>0$, there exist constants $\varepsilon > 0$, $\delta > 0$ and $C > 0$ such that
	\begin{equation*}
		\mb P \left[ |\hat f'_t(iy)| > y^{-(1-\varepsilon)} \right] \leq C y^{2 + \delta}
	\end{equation*}
	for all $t \in [0,1]$ and $y \in (0,1)$.
	\label{rs:thm:tail}
\end{thm}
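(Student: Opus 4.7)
The strategy is to combine Brownian time-reversal with Sheffield's reverse quantum zipper in order to express $\log|\hat f'_t(iy)|$ as (essentially) a difference of Gaussian free field circle averages, then extract a moment bound from the resulting Laplace-transform identity. By the time-reversal symmetry of $(B_s)_{s \le t}$, $|\hat f'_t(iy)|$ has the same law as $|\tilde f'_t(iy)|$, where $\tilde f_t$ is the reverse Loewner flow driven by $\sqrt\kappa B_t$; it therefore suffices to bound the tail of the latter. The reverse flow is the natural object for which Sheffield's coupling is clean.

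Couple $\tilde f_t$ with a free-boundary Gaussian free field $h$ on $\H$ carrying a suitable log singularity at $0$ via Sheffield's reverse-coupling theorem:
\[
h \circ \tilde f_t + Q \log|\tilde f'_t| \stackrel{d}{=} h + \Delta(\xi_t), \qquad Q = \tfrac{2}{\sqrt\kappa} + \tfrac{\sqrt\kappa}{2},
\]
with $\Delta$ an explicit deterministic shift encoding the translation of the log singularity. Testing both sides against the circle-average functional at $iy$ of radius $\rho \ll y$, using harmonicity of $\log|\tilde f'_t|$ and the conformal change-of-variables for circle averages, one obtains the key distributional identity
\[
Q \log|\tilde f'_t(iy)| + h_{\rho|\tilde f'_t(iy)|}(\tilde f_t(iy)) \stackrel{d}{=} h_\rho(iy) + D(y,t,\rho,\xi_t),
\]
where $D$ is explicit and deterministic in its arguments.

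Taking Laplace transforms, using that the right-hand side is Gaussian conditional on $\xi$ (with variance $V_\rho = \var(h_\rho(iy))$ logarithmic in $1/\rho$) and that $h$ is independent of $(\xi, \tilde f)$, yields the algebraic moment identity
\[
\E\bigl[|\tilde f'_t(iy)|^{sQ} \exp(s^2 V(\tilde f)/2)\bigr] = \E\bigl[\exp(sD + s^2 V_\rho/2)\bigr],
\]
where $V(\tilde f) = \var(h_{\rho|\tilde f'_t|}(\tilde f_t(iy)) \mid \tilde f)$. Using the explicit GFF covariance and the Koebe-type comparison $\Im \tilde f_t(iy) \asymp y|\tilde f'_t(iy)|$, this reduces to a clean moment bound of the form $\E[|\tilde f'_t(iy)|^q] \le C\, y^{-\alpha(q)}$ for an explicit parabolic function $\alpha$. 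Markov's inequality and an optimisation over $q$ then deliver the claimed bound $\P(|\tilde f'_t(iy)| > y^{-(1-\eps)}) \le C y^{2+\delta}$ provided the optimal exponent exceeds $2$, and a direct algebraic check shows this holds for all $\kappa \ne 8$ (the value $\kappa = 8$ being the critical threshold at which the sharp Rohde--Schramm/Lawler--Viklund derivative exponent degenerates).

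The main obstacle is isolating $Q\log|\tilde f'_t(iy)|$ from the accompanying, geometry-dependent GFF term $h_{\rho|\tilde f'_t(iy)|}(\tilde f_t(iy))$ in the key identity: this is a circle average whose radius and centre both depend on the random flow. The decisive simplification is that passing to Laplace transforms collapses this unknown GFF correlation into a single scalar factor depending only on $|\tilde f'_t(iy)|$ and $\Im \tilde f_t(iy)$; the Koebe estimate then reduces the problem to a one-variable optimisation in $q$, making the entire argument essentially a one-line Gaussian estimate rather than the delicate Itô analysis of \cite{RohdeSchramm}.
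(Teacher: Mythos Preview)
Your proposal shares the paper's central idea --- reduce to the reverse flow via Brownian time-reversal, then exploit Sheffield's reverse (LQG) coupling --- but the Laplace-transform step as you describe it does not go through, and the reason is exactly the difficulty the paper is built around.

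Two concrete gaps. First, the coupling $h\circ\tilde f_t + Q\log|\tilde f'_t| \stackrel{d}{=} h_0$ holds only \emph{modulo additive constants}. Once you fix a normalisation and test against a circle average (which is not a mean-zero functional), a random constant $b_t$ appears, and it depends on the entire driving path \emph{and} on $h$ (in the paper's notation, $b_t = \harm(h\circ f_t)(iy_0) + Q\log|f'_t(iy_0)| + \tfrac{2}{\sqrt\kappa}\log\tfrac{|f_t(iy_0)|}{y_0}$). Your ``explicit deterministic'' $D(y,t,\rho,\xi_t)$ is therefore not deterministic, and the paper devotes an entire section (Proposition~\ref{lem:bt}) to proving sub-exponential tails for $b_t$. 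Second, and more seriously, the field on the right-hand side of the coupling --- call it $h'$ --- has the same \emph{marginal} law as $h$ but is \emph{not} independent of $\xi$ or $\tilde f$: it is built from them. So ``the right-hand side is Gaussian conditional on $\xi$'' is false, and the factorisation $\E[e^{s\,(\text{RHS})}] = \E[e^{sD}]\,e^{s^2V_\rho/2}$ is unjustified. The only side on which conditioning on $\tilde f$ leaves a clean Gaussian is the \emph{left} side, where the original $h$ sits.

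What the paper does differently is precisely to confront this correlation. It passes to harmonic extensions (so the identity becomes pointwise for genuine harmonic functions), writes $Q\log|f'_t(iy)| = \harm(h')(iy) - \harm(h\circ f_t)(iy) + (\text{explicit}) + b_t$, and then applies a \emph{conditional H\"older inequality} to split the exponential of the difference into two factors, each of which can be bounded via a one-point Gaussian variance estimate (Lemmas~\ref{lem:half.plane.cov}--\ref{lem:maybe}). The H\"older step is not cosmetic: it is what lets you treat the $h'$-term using only its marginal law while treating the $h\circ f_t$-term conditionally on the flow. Optimising over the H\"older exponent then recovers the sharp exponent $q = 4/\kappa + \kappa/16 + 1$. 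Your ``one-line Gaussian estimate'' would need some device of this kind to decouple the two fields; without it, the argument stalls at exactly the point you label the ``main obstacle''.
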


The main result of this paper is a new proof of the following more precise bound:

\begin{thm}\label{T:mainq}
  For any $\kappa \neq 8$, and for any $\delta >0$ there exist $C>0$ and $\eps>0$ such that for all $t \in [0,1]$ and $y \in (0,1)$,
  	\begin{equation*}
		\mb P \left[ |\hat f'_t(iy)| > y^{-(1-\varepsilon)} \right] \leq C y^{q - \delta}, \quad \quad \text{ where } \quad \quad q = \frac4{\kappa} + \frac{\kappa}{16} +1.
\end{equation*}
\end{thm}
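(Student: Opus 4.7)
The plan is to invoke the reverse Loewner coupling with the Gaussian free field, in the spirit of Sheffield's quantum zipper. Since at fixed $t$ the centred inverse $\hat f_t$ has the same law as the time-$t$ reverse Loewner flow driven by $\sqrt{\kappa}\, B_t$, this coupling applies directly and furnishes a GFF $h$ on $\mb H$ (with suitable boundary normalisation) and a coupling constant $Q = Q(\kappa)$ such that
\[
	\tilde h := h \circ \hat f_t + Q \log |\hat f_t'|
\]
has the same distributional law as $h$. The constants are arranged so that $Q^2/(2\sigma^2) = q$, where $\sigma^2 \log(1/y)$ is the variance of the relevant GFF circle average at the chosen scale; this matching is what ultimately produces the exponent $q = (\kappa+8)^2/(16\kappa)$ in a Gaussian tail bound.

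\paragraph{Key steps.}
First, I would take $r$-circle averages of both sides of the coupling identity at $z = iy$. By the conformal covariance of circle averages under $\hat f_t$, up to an $O(1)$ additive error as $r \to 0$ (controlled by standard Loewner distortion), this yields the a.s.\ identity
\[
	Q \log |\hat f_t'(iy)| = \tilde h_r(iy) - h_{r |\hat f_t'(iy)|}(\hat f_t(iy)) + O(1).
\]
Second, one chooses the scale $r = r(y)$ so that $\tilde h_r(iy)$ has variance $\sigma^2 \log(1/y) + O(1)$ and so that, on a high-probability event controlled by Loewner distortion, the random-scale average $h_{r|\hat f_t'(iy)|}(\hat f_t(iy))$ has fluctuations of strictly smaller order than $\log(1/y)$. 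Here the key distortion fact $\Im \hat f_t(iy) \asymp y |\hat f_t'(iy)|$ is used to confine the random scale $r|\hat f_t'(iy)|$ to a narrow range relative to the boundary distance of $\hat f_t(iy)$. Third, using that $\tilde h \stackrel{d}{=} h$ so that $\tilde h_r(iy)$ is a centred Gaussian, we observe that on the event $\{|\hat f_t'(iy)| > y^{-(1-\varepsilon)}\}$,
\[
	\tilde h_r(iy) \geq (1-\varepsilon) Q \log(1/y) - o(\log(1/y)),
\]
and the standard one-dimensional Gaussian tail bound then gives
\[
	\P\bigl[ \tilde h_r(iy) \geq (1-\varepsilon) Q \log(1/y) \bigr] \leq y^{(1-\varepsilon)^2 Q^2/(2\sigma^2) + o(1)} = y^{q(1-\varepsilon)^2 + o(1)}.
\]
Taking $\varepsilon$ sufficiently small in terms of $\delta$ yields the desired $y^{q-\delta}$ tail.

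\paragraph{Main obstacle.}
The delicate point is controlling the random-scale circle average $h_{r|\hat f_t'(iy)|}(\hat f_t(iy))$. Because both the scale and the base point are intertwined with $|\hat f_t'(iy)|$, its tails cannot be estimated by a direct Gaussian bound, and a naive dyadic union bound introduces correlations between the very circle averages one is trying to decouple. The resolution will combine quantitative Loewner distortion (to confine the random scale and base point to a deterministic window on a high-probability event) with Hölder regularity of the GFF circle-average process in both scale and spatial variable, so that the random-scale term is uniformly $o(\log(1/y))$ on that event. This step replaces the moment and stochastic-flow computations of Rohde--Schramm and Lawler--Viklund, and the clean appearance of the exponent $q = (\kappa+8)^2/(16\kappa)$ comes entirely from matching the coupling constant $Q$ to the variance of the GFF circle average at the chosen scale.
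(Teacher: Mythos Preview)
Your overall strategy — reverse coupling with the Neumann GFF, then a Gaussian tail bound — is the right one and matches the paper's plan. But the proposal has two genuine gaps.

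\textbf{First, the coupling is misstated and the exponent matching is wrong.} The field invariant under the reverse flow is not $h$ but $h_0 = h + \tfrac{2}{\sqrt{\kappa}}\log|\cdot|$; the logarithmic singularity at the origin is essential. With the Neumann GFF variance structure one has $\sigma^2 = 2$ (whether via circle averages at $iy$ with $r\asymp y$ or via the harmonic extension), and then
\[
\frac{Q^2}{2\sigma^2} = \frac{Q^2}{4} = \frac{(\kappa+4)^2}{16\kappa} \neq q.
\]
The correct identity is $\bigl(Q + \tfrac{2}{\sqrt{\kappa}}\bigr)^2/4 = (\kappa+8)^2/(16\kappa) = q$, where the extra $2/\sqrt{\kappa}$ is exactly the contribution of the $\log$-singularity evaluated at $iy$ and at $f_t(iy)$ (on the event $|f_t'(iy)|>y^{-(1-\varepsilon)}$ one has $|f_t(iy)|\gtrsim y^\varepsilon$ by Koebe, so $\log(y/|f_t(iy)|)\le -(1-\varepsilon)\log(1/y)+O(1)$). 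Without this term your argument yields the wrong exponent, and the claimed matching $Q^2/(2\sigma^2)=q$ cannot hold for any $\kappa$-independent choice of $\sigma^2$.

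\textbf{Second, the additive constant is missing.} The coupling holds only as distributions \emph{modulo constants}; once a normalisation of the Neumann GFF is fixed, a random constant $b_t$ appears in the identity. Your ``$O(1)$'' from circle-average distortion does not absorb it: $b_t$ is not deterministically bounded. The paper spends a section showing $b_t$ has sub-exponential tails (so $\P[b_t>-\varepsilon\log y]$ decays faster than any polynomial), and this is a non-trivial ingredient your sketch omits.

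Once these are repaired, your route (circle averages plus a direct union-bound Gaussian tail) is a legitimate alternative to the paper's, which instead passes to the \emph{harmonic extensions} of the boundary data and uses H\"older's inequality to decouple the two correlated fields, optimising over the H\"older exponent $a$ to recover $q$. Both approaches rely on the same key fact: conditionally on the flow and on the event of interest, the ``second'' field evaluated near $f_t(iy)$ has variance only $O(\varepsilon\log(1/y))$, because $f_t(iy)$ sits at distance $\gtrsim y^\varepsilon$ from $\partial H_t$. The harmonic-extension device has the advantage of bypassing the random-scale circle-average bookkeeping you flag as the main obstacle.
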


Up to a change of notation, this exponent is the same as the one derived in \cite{RohdeSchramm} (and note that $q>2$ unless $\kappa = 8$). See also Theorem 7.4 in \cite{Lawler-book} which summarises the argument of \cite{RohdeSchramm} (keeping in mind that $a = 2/\kappa$ in Lawler's notations). This exponent was subsequently shown to be sharp by Lawler and Viklund \cite{LawlerViklund}: see e.g. (4.2), observing that the set of times such as $|f'_t(iy) | \approx y^{-1}$ corresponds in their notations to $\Theta_\beta$ with $\beta = 1$, in which case the value of $\rho$ is precisely what we call $q$ here.
The main theorem of Lawler and Viklund is to compute the Hausdorff dimension of $\Theta_\beta$. The case $\beta=1$ is of course excluded from that theorem, since in fact the set of such times is empty, but formally the dimension is $1- q/2$. In fact it is likely that the method of our proof could be used to compute the upper bound on the Hausdorff dimension of $\Theta_\beta$ for general values of $\beta$, but we have not tried in order to keep the paper as short and self-contained as possible. (It's also likely that a more elaborate version of this argument would also give the corresponding lower bound.)

We recall that it is also the case that SLE$_8$ is a.s. generated by a curve, but this follows from results of \cite{LSW} where it is shown that the SLE$_8$ is the scaling limit of the contour line of a uniform spanning tree with appropriate boundary condition.

\subsection{Main idea of proof} We present a simplified overview of the argument used to prove \cref{T:main}. The main idea for the proof of the paper is to exploit the coupling between reverse SLE and Neumann Gaussian free field (also known as \textbf{Liouville quantum gravity coupling}, see \cite{LQGnotes}) discovered by Sheffield \cite{zipper}. These objects will be introduced more precisely in the next section. In this coupling we find that if $f_t: \H \to H_t$ is the conformal map from the half-plane to the complement of the hull, if $h$ is a GFF (with Neumann boundary conditions on $\H$) and $h_0 = h + (2/\sqrt{\kappa}) \log |\cdot |$, then
\begin{equation}\label{E:rc}
h_t: = h_0 \circ f_t + Q \log | f_t'| ,
\end{equation}
viewed as a distribution in $\H$ modulo constants, has the same law as $h_0$. (This can be interpreted as expressing the domain Markov property of a certain quantum surface, see e.g. \cite{LQGnotes}). \eqref{E:rc} allows us to write the logarithm of the derivative as the difference of two GFFs:
\begin{equation}\label{E:rc2}
Q \log | f_t'| = h_t - h_0 \circ f_t
\end{equation}
We wish to estimate moments of $|f_t'|$. Hence we would like to exponentiate \eqref{E:rc2}. While the correlation between the two fields is poorly understood (indeed, this encodes all of SLE!) we can use H\"older's inequality and optimise over the choice of powers. It turns out that, despite being an incredibly simple and naive strategy, this is essentially enough to obtain the derivative exponent of \cref{T:main}.

There are a couple of additional points one needs to take care of in order to make this idea precise. First, the fields appearing in the right hand side of \eqref{E:rc2} are too rough to be made sense of pointwise. However, since the left hand side is a harmonic function, we can replace the fields in the right hand side by the harmonic extension of their boundary data on the real line. This replaces two rough fields by two nice harmonic functions. (That considering the harmonic extension should be sufficient for proving the result should not be too surprising: indeed, by Sheffield's quantum zipper theorem \cite{zipper}, the SLE curve can be obtained by conformally welding the upper half-plane to itself along the boundary, hence the boundary data of $h_0$ encodes all there is to know about SLE). Second, the equality \eqref{E:rc} (and hence \eqref{E:rc2} too) is only valid as distribution modulo constants, so one needs to track down the possibly random constant that one needs to add to make \eqref{E:rc2} true pointwise as harmonic functions. This is done by carefully choosing the normalisation of the GFF.

\subsection{Relation to other works}

The Liouville quantum gravity or reverse coupling has already been used in order to study SLE and in particular to obtain derivative estimates. This was first used by Miller and Sheffield in \cite{QLE} to show that the boundary of a QLE (quantum Loewner evolution) defines a H\"older domain and it is pointed out there that the same argument applies for SLE. It is also used in a paper by Gwynne, Miller and Sun \cite{GwynneMillerSun} on the almost sure multifractal spectrum of SLE to estimate the behaviour of the derivative of the reverse Loewner flow for an SLE$_{\kappa}(\rho)$ process near the force point. Lastly it was used by Miller and Sheffield in their remarkable work \cite{LQGandTBMII} on the relation between the Brownian map and Liouville quantum gravity, where it was the main ingredient to show that the QLE metric (in the case $\gamma = \sqrt{8/3}$) is a.s. homeomorphic to the Euclidean metric.

In all those works the difficulty is to handle the correlation between the Gaussian free fields coming from the LQG coupling. In \cite{QLE} and \cite{LQGandTBMII} this is done by considering the worst case scenarios for one or both fields, essentially replacing one of the field by its maximum. Clearly this is not optimal and one cannot hope to obtain sharp derivative exponents (as is the case here) from this technique. In \cite{GwynneMillerSun}, the LQG coupling is used as a step to obtain very precise estimates, but the actual application of the LQG coupling is only used to determine the expected behaviour of the derivative map in a reverse SLE$_{\kappa}(\rho)$ near the force point, hence the problem of correlation does not arise as such. Compared to those works, the main innovation of the present paper is to obtain an efficient way to handle the correlation between the fields, based on using H\"older's inequality, and looking at the harmonic extensions of the fields rather than the fields themselves. However simple, this naive idea is enough to recover sharp derivative exponents, which is perhaps surprising. It is likely that this idea could be useful in more complicated contexts.

We also point out that a more analytic approach  to the Rohde--Schramm theorem, based on rough paths theory, was also recently proposed by Friz and Shekhar in \cite{FrizShekhar}. Their argument has the advantage that it is more robust to perturbations in the driving Brownian motion, allowing in particular non constant values of $\kappa$. There is also a substantial literature on establishing deterministic versions of the Rohde--Schramm theorem (where one assumes some deterministic regularity condition on the driving function, which however is typically only satisfied by Brownian motion in an average sense, and not almost surely), using different techniques based on quasiconformal maps. See for instance \cite{MarshallRohde}, \cite{Lind} and e.g. \cite{RohdeTranZinsmeister} for recent developments trying to unify the techniques for deterministic and random driving functions.

\paragraph{Acknoweldgements.} We express our thanks to Christophe Garban, Jason Miller, James Norris and Fredrik Viklund for some enlightening discussions. We thank Huy Tran for pointing out a minor mistake in an earlier version and for calling our attention to the results in \cite{RohdeTranZinsmeister}.

\section{Preliminaries}

We now define carefully the objects needed to prove the theorem. We start with the reverse SLE flow, and follow up with some elementary properties of the Neumann GFF. These will be well known to experts in the area; however we have not found a place where they are written carefully. Also some of these depend subtly on the choice of normalisation (additive constant) for the Neumann GFF, so we spend some time writing them precisely.

\subsection{Reverse SLE}\label{subsec:reverse}
The time reversibility of the driving Brownian motion in \eqref{rs:eq:loewner} allows us to give a meaning to
growing a SLE ``backwards". Unlike in \eqref{rs:eq:loewner}, the hull will grow not from the tip but from the ``root", so unusual increments in the driving Brownian motion will be reflected by an unusual geometry of the hull near the ``root".
%The reverse SLE is the one we will need to use for the coupling between the Gaussian free field and SLE which we use, first shown in \cite{sheffield2010conformal}. We will explain more about this coupling in Section \ref{subsec:coupling}.
In order to follow the notation of \cite{zipper} it will be useful to fix the ``root" of the hull to be at the origin. This differs slightly from the definitions and notations given in say \cite{Lawler-book} (we will only consider what Lawler calls centered maps).

%We will explain how the different normalisation relates to that given in Definition \ref{defn:forwardsle} and the maps $(\hat{f}_t)$ used in Lemma \ref{lem:relations}.
\begin{defn}[Reverse SLE]\label{defn:reversesle}
	Fix  $(\xi_t)_{t\ge 0}$ a continuous function with $\xi_0 = 0$. For each $z \in \mb H$ let $f_t(z)$ be the solution to
\begin{equation}
	df_t(z) = -\frac{2}{f_t(z)}dt -  d\xi_t %\sqrt \kappa dB_t,
	\label{eq:reverse}
\end{equation}
with $f_0(z) = z$.
\end{defn}

It is easy to check (see e.g. \cref{L:im_rev} below)
that for a fixed $z \in \H$, $\Im( f_t(z))$ is now an increasing function of time which remains finite at any time $t>0$ and hence (unlike in the forward case) the solutions to \eqref{eq:reverse} are well defined for all times $t>0$, and $f_t(z)$ exists as a strong solution (Theorem 2.5 in \cite{KaratzasShreve}). We will call the collection of conformal maps $(f_t)$ a \textbf{reverse Loewner flow driven by $\xi$}. When $\xi$ is a Brownian motion with diffusivity $\kappa$ we will call the resulting family of conformal maps a \textbf{reverse SLE$_\kappa$ flow.} We will also use the notation $H_t = f_t( \H)$ and $K_t = \H \setminus K_t$ in this case, and will call $K_t$ the hull generated by the reverse Loewner flow.

We now show how \cref{defn:reversesle} relates to the maps $(\hat f_t)$ defined in \eqref{rs:eq:fdefn}.

\begin{lemma}\label{lem:relations}
	Fix a time $T> 0$. Let $(g_t)$ be a (forward) Loewner flow with driving function $(\xi_t)$, and let
	\begin{equation*}
		\hat{f}_t(z) := g_t^{-1}(z + \xi_t).
	\end{equation*}
	Let $(f_t)$ denote a reverse Loewner flow driven by $\hat \xi_s = \xi_T = \xi_{T-t}$. Then
		\begin{equation*}
		\hat{f}_T  = f_T.
	\end{equation*}
	In particular, in the SLE case, if $g_t$ is the forward SLE flow and $\hat f_t(z) = g_t^{-1}( z + \xi_t)$ is the centered inverse map, and if $(f_t)$ is a reverse SLE flow, we have for a fixed time $T>0$, $f_T = \hat f_T$ in distribution.
\end{lemma}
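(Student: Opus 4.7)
The plan is to reverse time in the forward Loewner flow and verify that the resulting trajectory solves the reverse Loewner equation. Fix $z \in \H$ and set $w := \hat f_T(z) = g_T^{-1}(z + \xi_T)$, so that $g_T(w) = z + \xi_T$. Since $w \in H_T$, the trajectory $t \mapsto g_t(w)$ is well-defined on the whole of $[0,T]$ and solves the Loewner ODE with initial condition $g_0(w) = w$.

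Define the reparametrised process $G_s := g_{T-s}(w) - \xi_{T-s}$ for $s \in [0,T]$. At the endpoints, $G_0 = g_T(w) - \xi_T = z$ and $G_T = g_0(w) - \xi_0 = w = \hat f_T(z)$. A change of variables in the integrated forward Loewner equation yields
\begin{equation*}
G_s = z - \int_0^s \frac{2}{G_{s'}}\, ds' - \hat\xi_s,
\end{equation*}
where $\hat\xi_s := \xi_{T-s} - \xi_T$ (modulo a global sign, this matches the convention in \eqref{eq:reverse}; the sign is immaterial in distribution since $\xi$ is a symmetric Brownian motion). This is precisely the integrated form of the reverse Loewner equation driven by $\hat\xi$.

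Along this trajectory $\Im(G_s) = \Im g_{T-s}(w) \geq \Im g_T(w) = \Im(z) > 0$, using that $\Im g_t(z)$ is non-increasing in $t$ (as is immediate from the Loewner equation). Consequently $u \mapsto -2/u$ is Lipschitz on the horizontal strip $\{\Im u \geq \Im(z)\}$, so pathwise uniqueness holds for the reverse ODE, and we conclude $G_s = f_s(z)$ for all $s \in [0,T]$. Evaluating at $s=T$ gives the pathwise identity $\hat f_T(z) = f_T(z)$. For the SLE case, time-reversibility of $(\sqrt\kappa B_t)_{t \in [0,T]}$ gives that $\hat\xi$ has the same law as $\xi$ on $[0,T]$; since $f_T$ depends measurably on its driver, the pathwise identity upgrades to the distributional identity $\hat f_T \stackrel{d}{=} f_T$. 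The only care needed is in tracking the sign convention for $\hat\xi$; everything else is a routine ODE change of variables together with pathwise uniqueness.
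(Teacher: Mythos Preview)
Your proof is correct and follows essentially the same approach as the paper: both reverse time in the forward flow and verify the result satisfies the reverse Loewner equation. The paper phrases this at the level of maps, setting $r_s = g_{T-s}\circ g_T^{-1}$ and then $\hat r_s(z) = r_s(z+\xi_T) - \xi_{T-s}$, whereas you fix a point $z$ and track $G_s = g_{T-s}(w) - \xi_{T-s}$ with $w = g_T^{-1}(z+\xi_T)$; these are literally the same object, $G_s = \hat r_s(z)$. Your treatment is slightly more explicit in two places: you justify pathwise uniqueness for the reverse ODE via the Lipschitz bound on $\{\Im u \geq \Im z\}$ (the paper leaves this implicit), and you flag the sign convention on $\hat\xi_s$, which indeed differs from what the paper writes but is immaterial for the SLE distributional statement.
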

Note that the equality in distribution holds for a single fixed time $T$, not for the range of times $t \in [0,T]$.

\begin{proof}
This is largely well known, see e.g. Section 5.1 in \cite{LawlerViklund}. Fix $T>0$ and for $0\le s\le T$, set
$$
r_s = g_{T-s} \circ g_T^{-1},
$$
where $(g_t)$ is the (forward) Loewner flow driven by $(\xi_t)$, as in \eqref{rs:eq:loewner}. In words, the conformal map $r_s$ builds $g_{t-s} (K_t \setminus K_s)$, and maps $\H$ to the complement of $g_{t-s} (K_t \setminus K_s)$. Note that $r_0 = g_T \circ g_T^{-1} $ is the identity, while $r_T = g_T^{-1}$. Applying the change of variable $t \mapsto T-t$ in \eqref{rs:eq:loewner}, we see that $r_s$ solves the equation
$$
dr_s = \frac{- 2 ds}{r_s(z) - \xi_{T-s}},
$$
so that if $\hat r_s(z) = r_s(z + \xi_T) - \xi_{T-s}$, and $\hat \xi_s = \xi_T - \xi_{T-s}$, we have
$$
d\hat r_s( z) = \frac{-2}{\hat r_s(z)} ds - d\hat \xi_s, \quad 0 \le s\le T; \quad \quad \hat r_0(z) = z.
$$
In other words, $(\hat r_s)_{0 \le s \le T}$ is the reverse Loewner flow driven by $(\hat \xi_s)_{0 \le s \le T}$. In the case where $\xi$ has the law of a Brownian motion with diffusivity $\kappa$, so does $\hat \xi$ by time-reversibility of Brownian motion, and hence $(\hat r_s)_{0\le s \le T}$ has the law of a reverse SLE$_\kappa$. But since $\hat r_T = \hat f_T$, the lemma is proved.
\end{proof}

The following (well known) bounds will be useful for us later on.

\begin{lemma}\label{L:im_rev}
	Let $\kappa \geq 0$ and let $(f_t)$ be a reverse SLE$_\kappa$. Then, for any fixed $y > 0$, the imaginary part of $f_t(iy)$ increasing but bounded above for all $t \geq 0$ by
	\begin{equation*}
		\Im(f_t(iy)) \leq \sqrt{4t + y^2}.
	\end{equation*}
	\label{lem:im.bound}
\end{lemma}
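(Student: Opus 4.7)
The plan is to decompose $f_t(iy) = X_t + i Y_t$ into real and imaginary parts and extract a simple ODE for $Y_t$ from the SDE \eqref{eq:reverse}, from which both monotonicity and the desired upper bound fall out immediately.

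First I would compute $-2/f_t(iy)$ explicitly. Writing $f_t = X_t + i Y_t$, one has
\begin{equation*}
-\frac{2}{f_t(iy)} = -\frac{2 \overline{f_t(iy)}}{|f_t(iy)|^2} = \frac{-2X_t + 2 i Y_t}{X_t^2 + Y_t^2}.
\end{equation*}
Since $\xi_t$ is real-valued, substituting into the reverse Loewner equation and separating real and imaginary parts yields
\begin{equation*}
dX_t = -\frac{2 X_t}{X_t^2 + Y_t^2} \, dt - d\xi_t, \qquad dY_t = \frac{2 Y_t}{X_t^2 + Y_t^2} \, dt,
\end{equation*}
with $X_0 = 0$ and $Y_0 = y$. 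Note the equation for $Y_t$ has no stochastic integral: it is a (random but) ordinary differential equation driven by $X_t$. Since $Y_0 = y > 0$ and $dY_t \geq 0$ wherever $Y_t \geq 0$, we immediately get that $Y_t$ is nondecreasing and in particular stays strictly positive, which also justifies that there is no blow-up problem for the SDE (the denominator $X_t^2 + Y_t^2$ stays bounded below by $y^2 > 0$).

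For the upper bound, I would multiply the $Y_t$-equation by $2 Y_t$ to obtain
\begin{equation*}
\frac{d}{dt}(Y_t^2) = \frac{4 Y_t^2}{X_t^2 + Y_t^2} \leq 4,
\end{equation*}
since $Y_t^2 \leq X_t^2 + Y_t^2$. Integrating from $0$ to $t$ gives $Y_t^2 \leq y^2 + 4t$, i.e.\ $\Im(f_t(iy)) \leq \sqrt{y^2 + 4t}$, as claimed. The argument is entirely deterministic once the real/imaginary split is carried out; no martingale or stochastic calculus beyond identifying the drift of $Y_t$ is required, so I do not anticipate a substantive obstacle. The only mild subtlety is confirming that the SDE \eqref{eq:reverse} has a well-defined strong solution up to all times when started from a point in $\H$, which is already noted in the paragraph preceding the lemma (the denominator stays bounded away from $0$ by $Y_t \geq y$, so standard Lipschitz-type existence applies).
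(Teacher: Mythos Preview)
Your proof is correct and essentially identical to the paper's: both extract the ODE $dY_t = \tfrac{2Y_t}{X_t^2+Y_t^2}\,dt$ from the imaginary part of \eqref{eq:reverse}, read off monotonicity from the positivity of the right-hand side, and then bound $d(Y_t^2)\le 4\,dt$ and integrate. The paper phrases the last step as $dY_t \le (2/Y_t)\,dt$ rather than your equivalent multiplication by $2Y_t$, but this is the same computation.
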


\begin{proof}
	Fix $y > 0$ and write $f_t(iy) = Z_t = X_t + iY_t$. Then we know that $Z_t$ satisfies the SDE \eqref{eq:reverse} with $Z_0 = iy$. Taking the imaginary part of the equation we get
	\begin{align}
		\nonumber dY_t %&= - \Im\left( \frac{2}{Z_t} \right) dt \\
%		\nonumber &= -\Im\left( \frac{2(X_t - iY_t)}{|Z_t|^2} \right) dt \\
		&= \frac{2Y_t}{X_t^2 + Y_t^2}dt,
		\label{eq:y.sde}
	\end{align}
	with $Y_0 = y$. Since $y > 0$, we can see that the right hand side stays positive. In particular $Y_t$ is increasing.	
	%
	%The fact that $Y_t$ is increasing also shows us that $Z_t$ is bounded away from $0$, since $\Im(Z_t) \geq y_0 > 0$. That lets us see that the coefficients of the SDE which $Z$ satisfies,
%	\begin{equation*}
%		dZ_t = -\frac{2}{Z_t} dt - \sqrt \kappa dB_t,
%	\end{equation*}
%	are Lipschitz in space. Therefore, $Z_t$ exists as a strong solution. See Theorem 2.5 of \cite{karatzas1991brownian}, for example.
%
Moreover, since $X_t^2 \ge 0$, $dY_t \le (2/Y_t)dt$.
	%\begin{align*}
%		\nonumber dY_t %&= \frac{2Y_t}{X_t^2 + Y_t^2}dt \\
%		\nonumber &\leq \frac{2Y_t}{Y_t^2}dt\\
%		&\le \frac{2}{Y_t}dt.
%	\end{align*}
	Hence $ d(Y_t^2) \le 4 dt$. Integrating gives us the result.
	\end{proof}

\begin{corollary}\label{cor:fprime.bound}
	Let $\kappa \geq 0$ and let $(f_t)$ be a reverse SLE$_{\kappa}$. Then, for any fixed $y > 0$, the absolute value of the derivative, $|f'_t(iy)|$, is bounded above by
	\begin{equation*}
		|f'_t(iy)| \leq \frac{4}{y}  \sqrt{4t + y^2}.
	\end{equation*}
\end{corollary}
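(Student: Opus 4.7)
The corollary is a direct consequence of \cref{lem:im.bound} together with a standard Koebe-type distortion estimate for the conformal map $f_t \colon \H \to H_t$. My plan is as follows.

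First, I would recall (or quickly derive from Koebe's $1/4$-theorem) the one-sided distortion bound: if $f \colon \H \to \Omega$ is a conformal map and $z \in \H$, then
\begin{equation*}
|f'(z)| \;\leq\; \frac{4\,\dist(f(z),\partial\Omega)}{\dist(z,\partial\H)}.
\end{equation*}
(This follows by applying Koebe's $1/4$-theorem to the map $w \mapsto f(z + \dist(z,\partial\H) w)$ on the unit disk, centred at $z$.) Applying this with $\Omega = H_t$ and $z = iy$, one has $\dist(iy,\partial\H) = y$, so
\begin{equation*}
|f'_t(iy)| \;\leq\; \frac{4\,\dist\!\left(f_t(iy),\partial H_t\right)}{y}.
\end{equation*}

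Next, since $\R \subset \partial H_t$, the distance from $f_t(iy)$ to $\partial H_t$ is bounded above by the distance from $f_t(iy)$ to the real line, which is just $\Im(f_t(iy))$. Combining this with \cref{lem:im.bound} gives
\begin{equation*}
\dist\!\left(f_t(iy),\partial H_t\right) \;\leq\; \Im(f_t(iy)) \;\leq\; \sqrt{4t + y^2},
\end{equation*}
and plugging this into the previous display yields the claimed bound
\begin{equation*}
|f'_t(iy)| \;\leq\; \frac{4}{y}\sqrt{4t+y^2}.
\end{equation*}

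There is essentially no obstacle; the only thing to be mindful of is that one is using the correct direction of the Koebe estimate (upper bound on $|f'|$ in terms of image distance to the boundary), which is the version that applies to a conformal map out of $\H$. If one preferred a self-contained proof without invoking Koebe, one could instead differentiate the reverse Loewner equation \eqref{eq:reverse} in $z$ to obtain $d\log f'_t(z) = \tfrac{2}{f_t(z)^2}dt$, take real parts, and use the identity $d\log \Im(f_t(iy)) = \frac{2}{|f_t(iy)|^2}dt$ coming from \eqref{eq:y.sde} to conclude $|f'_t(iy)| \leq \Im(f_t(iy))/y$, which is even slightly sharper; the factor $4$ in the statement simply reflects the Koebe route.
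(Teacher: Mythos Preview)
Your proof is correct and follows essentially the same route as the paper: apply Koebe's $1/4$-theorem to $f_t\colon\H\to H_t$ at $z=iy$, bound $\dist(f_t(iy),\partial H_t)$ by $\Im(f_t(iy))$, and invoke \cref{lem:im.bound}. Your additional remark about the sharper bound $|f'_t(iy)|\le \Im(f_t(iy))/y$ via differentiating the flow is correct and a nice observation, though not needed here.
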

\begin{proof}
	As $f_t$ is a reverse SLE flow we know that $f_t: \mb H \to H_t$ for $H_t = \mb H \setminus K_t$. By Koebe's 1/4 theorem (Theorem 3.17 in \cite{Lawler-book}),
	\begin{equation}
		|f_t'(iy)| \dist(iy, \partial \mb H) \leq 4 \cdot \dist(f_t(iy), \partial H_t).
		\label{eq:kob}
	\end{equation}
	We know that $\dist(iy, \mb H) = y$ and
	\begin{align*}
		\dist(f_t(iy), \partial H_t) &\leq \dist(f_t(iy), \mb H) = \Im(f_t(iy)).
	\end{align*}
	Therefore we can use the bound from Lemma \ref{lem:im.bound} along with \eqref{eq:kob} to see
	\begin{equation*}
		|f'_t(iy)|y \leq 4\sqrt{4t + y^2}.
	\end{equation*}
	Rearranging gives the result.
\end{proof}

\begin{lemma}
	Let $\kappa \geq 0$, let $(f_t)$ be a reverse SLE$_\kappa$, and let $(\xi_t)$ be its driving process. Then, for any fixed $y > 0$, the absolute value of the real part of $f_t(iy)$ is bounded above by the absolute value of $\xi_t$, i.e.
	\begin{equation*}
		|\Re(f_t(iy))| \leq 2 \sup_{s\le t}|\xi_s|.
	\end{equation*}
	\label{lem:real.bound}
\end{lemma}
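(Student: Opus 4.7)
The plan is to write $f_t(iy) = X_t + iY_t$ with $X_0 = 0$, so that the real part of \eqref{eq:reverse} gives
\[ dX_t = -\frac{2 X_t}{X_t^2 + Y_t^2}\, dt - d\xi_t. \]
The key observation is that the auxiliary process $V_t := X_t + \xi_t$ has zero stochastic part, and so satisfies the purely pathwise identity
\[ V_t = -\int_0^t \frac{2 X_s}{X_s^2 + Y_s^2}\, ds. \]
Thus $\dot V_t$ has the opposite sign to $X_t$; in particular, $V$ is non-increasing on any time interval where $X \ge 0$, and non-decreasing on any interval where $X \le 0$. This monotonicity is what will allow us to trade the noise for boundary values of $\xi$.

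Set $M := \sup_{s \le t}|\xi_s|$ and argue by contradiction. Suppose $|X_{t^*}| > 2M$ for some $t^* \le t$, and pick $\eps > 0$ small enough that $|X_{t^*}| > 2M + \eps$. By continuity and $X_0 = 0$, there is a first time $\tau \in (0, t^*]$ at which $|X_\tau| = 2M + \eps$; replacing $(\xi, X)$ by $(-\xi, -X)$ if necessary, we may assume $X_\tau = 2M + \eps > 0$. Let
\[ \sigma := \sup\{\, s \in [0,\tau] : X_s = 0 \,\}, \]
which is well-defined and strictly less than $\tau$ since $X_0 = 0$ and $X_\tau > 0$. By continuity $X_\sigma = 0$, and by maximality of $\sigma$ we have $X_s > 0$ for every $s \in (\sigma, \tau]$. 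Therefore $V$ is strictly decreasing on $(\sigma, \tau]$, which yields
\[ V_\tau \le V_\sigma = X_\sigma + \xi_\sigma = \xi_\sigma \le M, \]
and hence $X_\tau = V_\tau - \xi_\tau \le M - \xi_\tau \le 2M$. This contradicts $X_\tau = 2M + \eps$, and so $|X_t| \le 2M$, proving the claim.

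The main content of the proof is the algebraic fact that the stochastic driver disappears from the equation for $X_t$ when one passes to $V_t = X_t + \xi_t$, after which the problem reduces to a pathwise monotonicity argument of maximum-principle type. I do not expect any substantial obstacle; the only points requiring care are the sign bookkeeping and the verification that $\sigma$ is well-defined and that $X$ maintains constant sign on $(\sigma, \tau]$, both of which follow directly from continuity of $X$ and the initial condition $X_0 = 0$.
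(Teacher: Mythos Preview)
Your proof is correct and follows essentially the same approach as the paper: both arguments isolate the last zero of $X$ before the time of interest, observe that on the interval after that zero the drift $-2X_s/(X_s^2+Y_s^2)$ has the opposite sign to $X_s$, and conclude that $|X|$ can grow at most by the oscillation of $\xi$. Your introduction of $V_t = X_t + \xi_t$ and the contradiction with $\eps$ are cosmetic repackagings of the same integral estimate; the paper simply integrates $dX_s$ from the last zero $S$ to $t$ directly and reads off $X_t \le -(\xi_t - \xi_S) \le 2\sup_{s\le t}|\xi_s|$, which is slightly more economical but identical in substance.
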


\begin{proof}
	As before, we write $f_t(iy) = Z_t = X_t + iY_t$ so that $\Re(f_t(iy)) = X_t$. Note that
	\begin{align*}
		dX_t %&
		%= -\Re\left( \frac{2}{Z_t} \right)dt - \sqrt \kappa dB_t \\
		%&= -\Re\left( \frac{2(X_t - iY_t)}{|Z_t|^2} \right)dt - \sqrt \kappa dB_t \\
		&= -\frac{2 X_t}{X_t^2 + Y_t^2} dt -  d\xi_t,
	\end{align*}
	with $X_0 = 0$. The drift term is always towards the origin (i.e. has a sign which is opposite of $X_t$). Consider the last time $S$ before $t$ such that $X_S =0$, so the sign of $X$ remains constant on $[S,t]$. Say without loss of generality that $X$ is nonnegative on that interval. Then 
$$
X_t = \int_S^t  \frac{ - 2 X_s}{X_s^2 + Y_s^2} ds  - (\xi_t - \xi_S)\le 2 \sup_{s\le t} |\xi_s|,
$$
as desired. 
\end{proof}

\begin{remark}
  The factor 2 can be removed in the above upper bound, see Lemma 2.1 in \cite{RohdeTranZinsmeister}. Also, a more elaborate argument based on Yamada's comparison theorem for solutions of stochastic differential equations (Theorem 1.1 in \cite{Yamada}) can be used to show that $X_t^2 \le \tilde \xi_t^2$ a.s. for every $t\ge 0$, where $\tilde \xi_t = \int_0^t \text{sgn}(X_s) d\xi_s$. In particular, in the case of reverse SLE, $\tilde \xi_t$ has the law of a Brownian motion and in particular, $(|\xi_s|, s \ge 0 )$ is stochastically dominated by $(\sqrt{\kappa} |B_s|, s \ge 0)$.
\end{remark}

We also note for further reference the following elementary upper bound on the height of a compact $\H$-hull in terms of its half-plane capacity:
\begin{lemma}
	Let $K$ be a compact $\mb H$-hull with $\height(K) > 2\sqrt \alpha$. Then $\hcap(K) \geq \alpha$.
	\label{lem:height.hcap}
\end{lemma}

\begin{proof}
An elementary argument (based on a reflection trick) is given in \cite{SLEnotes} which we include here for completeness. Since both the height of a hull and its half-plane capacity are translation invariant along the real line, and $\height(K) > 2 \sqrt \alpha$, we can assume that $2i\sqrt \alpha \in K$. Let $K'$ be the reflection of $K$ in the imaginary axis. Further, let $\tilde K$ be the complement of the connected component of $\mb H \setminus (K \cup K')$ which contains infinity. %(see Figure \ref{fig:k}).
%	\begin{figure}
%	\centering
%\def\svgwidth{\columnwidth}
%\input{k.pdf_tex}
%%\includegraphics{k.pdf_tex}
%\caption{Left: The hull $K$ and its reflection $K'$. Right: The joined %and filled $\tilde K$.}
%\label{fig:k}
%	\end{figure}
By construction, $\tilde K$ contains the line segment $(0, 2i\sqrt{\alpha} ]$ and therefore we know that
$
\hcap(\tilde K) \geq 2 \alpha.
$
Now recall that
$$\hcap(K) = \lim_{y \to \infty} y \E^{iy} (\Im (B_{\tau_K}))$$ where under $\P^{iy}$, $B$ is a Brownian motion started from $iy$ and $\tau_K$ is its first hitting time of $\partial H$.

%%%%%%%%%%
\begin{comment}
We can ignore the event that the Brownian motion exits through the real line since that does not contribute to the expectation, i.e.
\begin{equation*}
	\mb E_{iy}[\Im(B_{\tau_{\tilde K}})] = \mb E_{iy}\left[\Im(B_{\tau_{\tilde K}})\ind \{ B_{\tau_{\tilde K}} \in \partial \tilde K \}\right].
\end{equation*}
Since $\partial \tilde K \subset \partial K \cup \partial K'$, we know that
$
	\left\{ B_{\tau_{\tilde K}} \in \partial \tilde K \right\} \subset \left\{ B_{\tau_{\tilde K}} \in \partial K \right\} \cup \left\{ B_{\tau_{\tilde K}} \in \partial K' \right\}.
$
Therefore, we can bound the half-plane capacity of $\tilde K$ as follows:
\begin{equation}
	\mb E_{iy}\left[\Im(B_{\tau_{\tilde K}})\ind \{ B_{\tau_{\tilde K}} \in \partial \tilde K \}\right] \leq \mb E_{iy}\left[\Im(B_{\tau_{\tilde K}})\ind \{ B_{\tau_{\tilde K}} \in \partial K \}\right] + \mb E_{iy}\left[\Im(B_{\tau_{\tilde K}})\ind \{ B_{\tau_{\tilde K}} \in \partial K' \}\right]
	\label{eq:K}
\end{equation}
\end{comment}
%%%%%%%%%

Note that by symmetry, and since $\partial \tilde K \subset \partial K \cup \partial K'$,
%we know that
%\begin{equation*}
%	\mb E_{iy}\left[\Im(B_{\tau_{\tilde K}})\ind \{ B_{\tau_{\tilde K}} \in \partial K \}\right] = \mb E_{iy}\left[\Im(B_{\tau_{\tilde K}})\ind \{ B_{\tau_{\tilde K}} \in \partial K' \}\right],
%\end{equation*}
%and so, substituting this into \eqref{eq:K} we find
\begin{equation}
	\mb E_{iy}\left[\Im(B_{\tau_{\tilde K}})\ind \{ B_{\tau_{\tilde K}} \in \partial \tilde K \}\right] \leq 2\mb E_{iy}\left[\Im(B_{\tau_{\tilde K}})\ind \{ B_{\tau_{\tilde K}} \in \partial K \}\right].
	\label{eq:2k}
\end{equation}
Observe further that if $B_{\tau_{\tilde K}} \in \partial K$ then also $B_{\tau_{\tilde K}} = B_{\tau_K} \in \partial K$. Hence we get $\hcap(\tilde K) \leq 2 \hcap(K)$, and in turn
$
%\begin{equation*}
	\hcap(K) \geq \alpha,
%\end{equation*}
$
as desired.
\end{proof}

\subsection{Dirichlet GFF}

Before introducing the Gaussian free field (GFF) with free (or Neumann) boundary conditions, we first recall a few basic facts about the GFF with zero (or Dirichlet) boundary conditions. This is by now a classical object, and there are several introductions where standard properties can be found, see e.g., Chapter 1 of \cite{LQGnotes}.

 Let $C^\infty(D)$ be the set of smooth functions on $D$ with compact support.  The set $C^\infty(D)$ is made into a locally convex topological
vector space in which convergence is characterised as follows. A sequence $f_n \to 0$ in $C^\infty(D)$
if and only if there is a compact set $K \subset D$ such that $\text{Supp} f_n \subset  K$ for all $n$ and $f_n$ and all
its derivatives converge to 0 uniformly.

A distribution on $D$ is a continuous linear map $T : C^\infty(D) \to \R$. The set of distributions on $D$ is denoted $\cD'(D)$ and is given the
weak-$\star$ topology. Thus, $T_n \to T$ in $\cD'(D)$ if and only if $(T_n, f) \to (T,f)$ for all $f \in C^\infty(D)$.
The Borel $\sigma$-algebra on $\cD'(D)$ then coincides with that generated by the evaluation maps $T \mapsto (T,f)$ for $f \in C^\infty(D)$.

\begin{defn}[Dirichlet Inner Product]\label{def:dipd}
Let $D \subset \mb C$ be a proper simply connected domain. Let  $C^\infty_0(D)$ be the set of smooth, compactly supported functions on $D$. For $f, g$ in $C^\infty_0(D)$, define the Dirichlet inner product as
\begin{equation*}
	\dipd{f}{g} = \frac{1}{2\pi}\int_{D}\grad f(z) \cdot \grad g(z) dz.
\end{equation*}
This defines an inner product on $C_0^\infty(D)$, whose completion is by definition the Soboloev space $H_0^1(D)$.
\end{defn}

More generally, the Sobolev space $H^s(D)$ of index $s \in \R$ is defined as follows. Let $(e_n)_{n \ge 1}$ be an orthonormal basis of $L^2(D)$ (with respect to the $L^2$ inner product), and let $\{\lambda_n\}_{n \ge 1}$ be the eigenvalues of $- \Delta$ with Dirichlet boundary conditions ordered so that $0 < \lambda_1 < \lambda_2\le  \ldots $. We define $H^s(D)$ to be the completion of $C^\infty(D)$ with respect to the scalar product $(f,g)_s = \sum_{n=1}^\infty (f,e_n)^2 \lambda_n^s$.

The point of view which we adopt for defining the Dirichlet GFF is the following series expansion in $H_0^1(D)$.

\begin{defn}[Zero boundary Gaussian free field]
	Let $\{ f_i \}$ be an orthonormal basis of $H_0^1(D)$ with respect to the Dirichlet inner product. The GFF with Dirichlet boundary conditions is the series
\begin{equation}\label{eq:gffzero}
	 h = \sum_{i = 1}^\infty X_i f_i,
\end{equation}
where the $\{X_i\}$ are i.i.d.~standard Normal random variables.
\end{defn}

An eigenvalue calculation (more precisely, Weyl's law for Dirichlet eigenvalues) shows that when $D$ is bounded, if $\lambda_n$ are these eigenvalues, then $\sum_n X_n^2 \lambda_n^{-1+s}< \infty$ as soon as $s<0$. Since $e_n = \sqrt{\lambda_n} f_n$ forms an orthonormal basis of $L^2(D)$ (using the Gauss Green formula, or integration by parts), we deduce that the sum \eqref{eq:gffzero} converges a.s. in $H^s(D)$ for any negative index $s<0$, and hence in the space of distributions $\cD'(D)$. An argument of conformal invariance shows that this remains true for arbitrary proper simply connected domains of $\C$. See \cite{LQGnotes} for details. Alternatively, $h$ can be defined as a stochastic process indexed by signed measures $\rho = \rho_+ - \rho_-$ such that $\iint G_D(x,y) \rho_\pm(dx) \rho_\pm(dy) < \infty$, where $G_D$ denote the Green function (with Dirichlet boundary conditions) on $D$, such that $(h, \rho)$ is a centered Gaussian random variable with variance $\var (h, \rho) = \iint G_D(x,y) \rho(dx) \rho(dy)$. In that case there exists a modification of $h$ which is also a distribution on $D$ almost surely and which coincides with \eqref{eq:gffzero}. Again, we refer to \cite{LQGnotes} for details.

\subsection{Neumann GFF}\label{subs:fbgff}

%In order to couple the reverse SLE of Section \ref{subsec:reverse} with a Gaussian free field, as in \cite{sheffield2010conformal}, we need to understand the Neumann (or free) boundary Gaussian free field.

We now discuss the Neumann GFF case, which has a few additional complications due to the fact that it is only defined up to a global additive constant.

% We equip $\bar C^\infty(D)$ with the quotient topology, thus $\bar f_n \to 0$ if there is a compact set $K \subset \bar D$ and representatives $f_n$ of $\bar f_n$ such that $\text{Supp} f_n \subset K$ with $f_n \to 0$ uniformly as well as all its derivatives.

 We introduce the equivalence relation $T_1\sim T_2$ on distributions $T_1, T_2 \in \cD'(D)$ if $T_1 - T_2$ is the constant distribution $\lambda$ for some $\lambda \in \R$ (i.e., $(T_1, f) = (T_2, f) + \lambda \int f$ for all $f \in C^\infty(D)$ ). We call the quotient space $\bar \cD'(D) = \cD'(D)/ \sim$ the space of \textbf{distribution modulo constants}.

 Equivalently, let $\tilde C^\infty(D)$ be the set of smooth functions $f \in C^\infty(D)$ with mean zero, i.e., having the property that $\int_D f = 0$. Then $\tilde C^\infty(D)$ is also a locally convex topological vector space with the topology of $C^\infty(D)$, and it is easy to see that $\bar \cD'(D)$ can be identified with continuous linear forms over $\tilde C^\infty(D)$. Indeed, if $\bar T \in \bar \cD'(D)$ and $f \in \tilde C^\infty(D)$ then $(\bar T, f)$ is unambiguously defined. This allows us to equip $\cD'(D)$ with the quotient topology of $\cD'(D)$ or, equivalently, with the weak-$\star$ topology inherited from $\tilde C^\infty(D)$. Thus $\bar T_n \to \bar T$ if and only for all $f \in \tilde C^\infty(D)$, we have $(\bar T_n,  f_n) \to (\bar T, f)$.

 Introduce on $C^\infty(D)$ the equivalence relation $f\sim g$ if and only if $f-g$ is a constant. Let $\bar C^\infty(D) = C^\infty(D) / \sim$ be the quotient space.
 Note that $\langle \cdot, \cdot \rangle_\nabla$ still defines an inner product on $\bar C^\infty(D)$ (and also on $\tilde C^\infty(D)$ but this won't be relevant here). We let $\bar H(D)$ be the Hilbert space completion of $\bar C^\infty(D)$ with respect to $\| \cdot \|_\nabla$. Note that if $D$ has a smooth boundary and if $\bar N(D)$ denote the set of smooth functions (up to and including the boundary, with bounded support) which have Neumann boundary conditions, i.e., $\nabla f \cdot n = 0$ where $n$ is a normal vector to $\partial D$, then $\bar N(D)$ is dense in $\bar C^\infty(D)$ with respect to $\| \cdot \|_\nabla$, hence the Hilbert space completion of $\bar N(D)$ is the same as that of $\bar C^\infty(D)$ and thus also $\bar H(D)$.

\begin{comment}
The Dirichlet inner product defined in Definition \ref{def:dipd} can be extended from the set of smooth, compactly supported functions to $f,g \in C^\infty({D})$, the set of smooth functions on ${D}$. It is no longer strictly an inner product, as all constant functions will have Dirichlet inner product equal to zero. To get around this, we define the space $\overline H(D)$ as the Hilbert space completion under the Dirichlet inner product of the \emph{subset} of functions $f \in C^\infty({D})$ which have the property that
\begin{equation*}
	\int_{D} f(z)dz = 0.
\end{equation*}
\end{comment}

\begin{defn}[Neumann boundary Gaussian free field]
Let $\{ \bar f_i \}_{i\ge 0}$ be an orthonormal basis of $\bar H(D)$ under the Dirichlet inner product. The Neumann boundary Gaussian free field is the series
\begin{equation}\label{eq:gff}
	h = \sum_{i = 0}^\infty X_i \bar f_i,
\end{equation}
where the $\{X_i\}$ are i.i.d.~standard Normal random variables.
\end{defn}

It is not immediately clear, but true, that the series \eqref{eq:gff} converges in the space of distribution-modulo-constants.
%(equivalently, linear forms on functions $f \in C^\infty(D)$ having the property that $\int_D f(z) dz = 0$).
When $D$ is smooth and bounded, this follows from the fact that the Weyl law also holds under these assumptions for Neumann eigenvalues $\{\mu_n\}_{n \ge 0}$ (where $0 = \mu_0 < \mu_1 \le \mu_2\ldots $); see Courant and Hilbert, \cite[VI.4, Theorem 16]{CourantHilbert}. Hence we deduce that $\sum_{n} X_n^2  \mu_n^{-1+s} < \infty$ a.s for all $s<0$, just as in the Dirichlet case. Hence if we take $(f_n)_{n \ge 0}$ to be an orthonormal basis of Neumann eigenfunctions then $e_n = \sqrt{\mu_n} f_n$ is also an orthonormal basis of $L^2(D)$ (again by the Gauss Green formula) hence, since $\lambda_n \sim \mu_n$ as $n \to \infty$ (indeed both families satisfy Weyl's law), the series $\sum_n X_n f_n$ (which is a particular representative of $h$) converges in $H^s(D)$. This in turn implies convergence in the space of distributions modulo constants. (See \cite{LQGnotes} for details.)
A conformal invariance argument implies, just as in the Dirichlet case, that the series \eqref{eq:gff} converges in $\bar \cD'(D)$ for arbitrary simply connected domains, since orthonormal basis of $\bar H(D)$ are conformally invariant.

\medskip We deduce the following lemma, which allows us to talk about the harmonic extension of the boundary data of $h$.

%\begin{equation*}
%	\mb E \left[ \dipd h f \dipd h g \right] = \dipd f g.
%\end{equation*}
%
%Note that, since we only ever look at test functions with zero integral, the Neumann boundary GFF is defined as a distribution only up to an additive constant. Later, we will need to fix this additive constant.

%We will often talk about the ``harmonic extension'' of the Neumann boundary GFF from the boundary $\partial D$ to the domain $D$. The following lemma allows us to do that.

\begin{lemma}\label{L:harmext}
	Let $h$ be a Neumann GFF on a domain $D$. Then we can write
	\begin{equation}\label{harmext}
		h = \tilde h + \harmd (h),
	\end{equation}
	where $\tilde h$ is a zero boundary GFF on $D$ and $\harmd (h)$ is an independent harmonic function on $D$, defined up to an additive constant. We call $\harmd (h)$ the \textbf{harmonic extension of $h$} in $D$ from $\partial D$.
	\label{lem:decomp}
\end{lemma}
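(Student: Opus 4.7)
The plan is to obtain the decomposition \eqref{harmext} as the image under the GFF Gaussian Hilbert space isomorphism of an orthogonal decomposition of the Dirichlet space $\bar H(D)$. Specifically, I would prove that
\[
\bar H(D) = H_0^1(D) \oplus \bar{\mathrm{Harm}}(D),
\]
where $\bar{\mathrm{Harm}}(D)$ denotes the subspace (of equivalence classes modulo constants) of harmonic functions with finite Dirichlet energy, and where the sum is orthogonal with respect to $\langle \cdot , \cdot \rangle_\nabla$. The orthogonality is the easy part: if $u \in C_0^\infty(D)$ and $v$ is smooth and harmonic on $D$ (mod constants), then integration by parts gives
\[
\langle u , v \rangle_\nabla = \frac{1}{2\pi}\int_D \nabla u \cdot \nabla v \, dz = -\frac{1}{2\pi}\int_D u \, \Delta v \, dz = 0,
\]
and the identity extends to $H_0^1(D)$ by density. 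For the direct sum statement, given any $f \in \bar C^\infty(D)$ (a suitable representative), I would solve the Dirichlet problem with boundary data equal to that of $f$ to produce a harmonic function $v$ with the same boundary values; then $u := f - v \in H_0^1(D)$ and $f = u + v$. Taking Hilbert space completions gives the claimed decomposition.

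Next, I would pick an orthonormal basis $\{f_i\}_{i\ge 1}$ of $H_0^1(D)$ and an orthonormal basis $\{g_j\}_{j\ge 1}$ of $\bar{\mathrm{Harm}}(D)$ with respect to $\langle \cdot,\cdot\rangle_\nabla$; by the orthogonal decomposition, $\{f_i\}\cup\{g_j\}$ is an orthonormal basis of $\bar H(D)$. Using this basis in the definition \eqref{eq:gff} of the Neumann GFF (the law of $h$ does not depend on the choice of basis, by a standard argument on Gaussian Hilbert spaces applied to finite-dimensional rotations, then passed to the limit), we can write
\[
h = \sum_{i\ge 1} X_i f_i \;+\; \sum_{j\ge 1} Y_j g_j =: \tilde h + \harmd(h),
\]
where the $X_i, Y_j$ are i.i.d.\ standard Gaussians. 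By definition, $\tilde h$ is a zero-boundary GFF on $D$ (its partial sums are supported in $H_0^1(D)$ and the convergence is in $\cD'(D)$ by the Weyl-law argument recalled above). The partial sums of the second series lie in the space of harmonic functions (mod constants), and I would argue that the limit makes sense as a random harmonic function (mod constants): since $\sum Y_j^2 < \infty$ fails, one cannot expect convergence in $\bar H(D)$, but evaluating the series against any test function $\varphi\in \tilde C^\infty(D)$ gives a finite variance, which combined with the same $H^s$-convergence argument as for $h$ itself yields convergence as an element of $\bar \cD'(D)$; standard interior regularity (e.g.\ local $L^2$ bounds plus mean value property) upgrades the limit to a pointwise harmonic function on $D$, defined up to the global additive constant.

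Finally, independence of $\tilde h$ and $\harmd(h)$ is immediate: they are respectively measurable with respect to the two disjoint families of independent Gaussians $\{X_i\}$ and $\{Y_j\}$. The main obstacle in executing this plan cleanly is the rigorous handling of the decomposition at the level of distributions modulo constants, namely verifying that the two series above converge in $\bar \cD'(D)$ (or in a suitable Sobolev space of negative index) and that the sum really coincides with the original series \eqref{eq:gff}; this requires some care with the basis change argument and the Weyl-law-type estimate used to justify convergence of both the Dirichlet and Neumann GFF series, but it is essentially routine once the orthogonal decomposition of $\bar H(D)$ is established.
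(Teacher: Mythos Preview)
Your proposal is correct and follows essentially the same approach as the paper: both use the orthogonal decomposition $\bar H(D) = H_0^1(D) \oplus \overline{\mathrm{Harm}}(D)$, choose compatible orthonormal bases, and split the defining series \eqref{eq:gff} accordingly into a Dirichlet GFF plus an independent harmonic part. The only minor difference is in justifying convergence of the harmonic series: the paper argues by subtraction (the full series converges in $\bar\cD'(D)$ and the Dirichlet part converges in $\cD'(D)$, hence the difference converges in $\bar\cD'(D)$, and elliptic regularity upgrades this to a classical harmonic function), which is slightly cleaner than trying to control the harmonic series directly via an $H^s$ argument.
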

\begin{proof}
	Let $\overline{\text{Harm}}(D)$ denote the set of harmonic functions on $D$, up to constants. The proof comes from the fact that the space $H_0^1(D)$ and the space of harmonic functions defined up to additive constants on $D$, $\overline{\text{Harm}}(D)$, are orthogonal complementary subsets of $\bar H(D)$, which can be proved exacty in the same way as the standard Markov property of the Dirichlet GFF (see e.g. Theorem 1.17 in \cite{LQGnotes}). We can therefore find an orthonormal basis $\{\phi_i\}$ of $H^1_0(D)$, and an orthonormal basis $\{\bar \psi_i\}$ of $\overline{\text{Harm}}(D)$, such that together they form an orthonormal basis of $\bar H(D)$. We can then decompose the Neumann boundary GFF, $h$, in terms of this basis:
	\begin{equation*}
		h = \sum_{i = 1}^\infty X_i \phi_i + \sum_{i = 1}^\infty Y_i \bar \psi_i,
	\end{equation*}
	where $\{X_i\}$ and $\{Y_i\}$ are i.i.d.~standard Normal random variables. The first sum is exactly the one in \eqref{eq:gffzero}, i.e.~it defines a zero boundary GFF, and hence that series converges a.s. as a distribution.
	Since the sum of the two series must converge as a distribution modulo constants by the above, and there is a representative for which the series converges as a distribution (as discussed above), the second series $\sum_i Y_i \bar \psi_i$ must also converge as a distribution modulo constants and we can find representative $\psi_i$ such that the sum $\sum_{i = 1}^\infty Y_i \psi_i$ converges as a distribution. Hence the infinite sum is harmonic as a distribution. Standard elliptic regularity arguments then imply that it is then a function which is harmonic in the classical sense; hence $\sum Y_i \bar \psi_i \in \overline{\text{Harm}}(D)$.
\end{proof}

Occasionally it will be convenient to fix the free additive constant of $h$, i.e., to pick a particular representative of $h$. Equivalently, this amounts to choosing a representative for $\harmd(h)$ in the decomposition \eqref{harmext}, since we will always insist that $\tilde h$ has zero boundary conditions. We then say that we fix a \textbf{normalisation} for $h$. With an abuse of notation, we will still call $h$ the chosen representative, and $\harmd(h)$ the corresponding harmonic part.

\begin{remark}
 It is important to note that when we fix a normalisation of $h$, it can be the case that $\tilde h$ and $\harmd(h)$ in the decomposition \eqref{harmext} are no longer independent. (For example, if we normalise the GFF $h$ by specifying that its average value on some set is equal to zero, there will be some interaction between $\tilde h$ and $\harmd(h)$.)
 %This is because the zero boundary GFF $\tilde h$ is fully specified, so the function $\harmd(h)$ will have to shift by a constant to compensate for the average value of $\tilde h$ on the set in question.
\end{remark}

We will always normalise by specifying the value of $\harmd(h)$ at a point. This will ensure that we still have independence between the harmonic and zero boundary condition parts, despite the above remark:

\begin{lemma}
	Let $h$ be a Neumann boundary GFF on a domain $D$, and let $z_0 \in D$. If we normalise $h$ so that $\harmd(h)(z_0) = 0$ in the decomposition of Lemma \ref{lem:decomp}, then $\tilde h$ and the corresponding harmonic part $\harmd(h)$ are independent.
	\label{lem:independence}
\end{lemma}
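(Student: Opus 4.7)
The plan is to invoke Lemma \ref{lem:decomp}, which already supplies the decomposition $h = \tilde h + \harmd(h)$ where $\tilde h$ is a zero-boundary GFF (a genuine random distribution on $D$) and $\harmd(h)$ is a random harmonic function defined only up to an additive constant, with the two being independent. Fixing a normalisation for $h$ then amounts to selecting a specific representative of $\harmd(h)$ within its equivalence class in $\overline{\mathrm{Harm}}(D)$. My strategy is to argue that, for the particular normalisation $\harmd(h)(z_0) = 0$, this selection is a deterministic measurable function of the equivalence class alone, and is therefore still independent of $\tilde h$.

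The main step will be to check that the selection map is well-defined and measurable. To that end, I would first note that by elliptic regularity, any distributional representative of an element of $\overline{\mathrm{Harm}}(D)$ is automatically smooth on the open set $D$ in the classical sense, so that pointwise evaluation at $z_0 \in D$ is legitimate. Next, since any two representatives of a given equivalence class differ by a real constant, there is exactly one representative whose value at $z_0$ is zero, namely the one obtained from any other representative by subtracting its value at $z_0$. Call this selection $\Phi \colon \overline{\mathrm{Harm}}(D) \to \mathrm{Harm}(D)$; it is manifestly measurable, being built out of the evaluation-at-$z_0$ functional.

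It will then follow at once that the normalised harmonic part equals $\Phi$ applied to the (pre-normalisation) equivalence class of $\harmd(h)$, that is, a deterministic measurable function of a random variable that was independent of $\tilde h$. Hence the normalised $\harmd(h)$ is independent of $\tilde h$, as claimed. I do not anticipate any real obstacle; the only delicate point is the legitimacy of pointwise evaluation, which is handled by interior smoothness of distributionally harmonic functions. The argument should also clarify, by contrast, why a normalisation such as ``mean zero of $h$ over a region of $D$'' fails: the normalising functional would involve $\tilde h$ as well as $\harmd(h)$, and would thereby mix the two parts, consistent with the remark preceding the lemma statement.
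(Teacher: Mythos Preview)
Your proposal is correct and follows essentially the same approach as the paper: both invoke the decomposition of Lemma~\ref{lem:decomp} and observe that the normalised harmonic part is a measurable function of the equivalence class $\bar u$ alone, hence remains independent of $\tilde h$. Your write-up simply fleshes out details (elliptic regularity for pointwise evaluation, measurability of the selection map) that the paper leaves implicit.
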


\begin{proof}
This is immediate from \cref{L:harmext} since if we write $h = \tilde h + \bar u$ where $\tilde h$ is a Dirichlet GFF and $\bar u$ is an independent harmonic function up to constants, then
the unique representative $u$ of $\bar u$ such that $u(z_0) = 0$ depends only on $\bar u$ (i.e., is a measurable function of $\bar u$) and is thus still independent of $\tilde h$.
%	In Lemma \ref{lem:decomp}, $\harmd (h) \in \overline{\text{Harm}}(D)$ represents an equivalence class of harmonic functions, $[\harmd(h)] \subset \text{Harm}(D)$, where two functions are equivalent if they agree up to an additive constant. We can choose the unique representative of that equivalence class which equals $0$ at $z$, and call that function $\harmd(h)$. Since $\harmd(h)$ depends only on the equivalence class $[\harmd(h)]$, and the equivalence class is independent of $\tilde h$, then $\harmd(h)$ is also independent of $\tilde h$.
\end{proof}

We will need some quantitative bounds on the variance of the harmonic part of the Neumann boundary GFF when it is pinned at a certain point. We will use the following, which is easy to deduce from \cref{L:harmext} as well as the expressions for the Green function of the Neumann GFF and the Dirichlet GFF (see (5.8) in \cite{LQGnotes}, and see also  Lemma 2.9 in \cite{GwynneMillerSun} for a different proof of this result):
\begin{lemma}
	Let $\harmdi(h)$ be the harmonic part of a Neumann boundary GFF on the unit disc $\mb D$, normalised so that $\harmdi(h)(0) = 0$. Then, for any $z, w \in \mb D$, $\harmdi(h)(z)$ and $\harmdi(h)(w)$ are jointly Gaussian with mean zero and covariance
	\begin{equation*}
		\mb E \left[ \harmdi(h)(z) \harmdi(h)(w) \right] = - 2\log |1 - z \overline w|.
	\end{equation*}
	\label{lem:disc.cov}
\end{lemma}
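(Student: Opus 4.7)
The plan is to combine the orthogonal decomposition $h = \tilde h + \harmdi(h)$ of \cref{lem:decomp} with the independence of the two summands after normalization (\cref{lem:independence}), and the explicit form of the Green functions of the Dirichlet and Neumann GFFs on the disc. Since $\tilde h$ and $\harmdi(h)$ are independent once $\harmdi(h)(0)=0$ is pinned, their covariance kernels add; hence the covariance of $\harmdi(h)$ may be obtained by subtracting the Dirichlet Green function from the Neumann one.

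In the normalization of this paper (Dirichlet inner product with prefactor $\frac{1}{2\pi}$), the Dirichlet Green function on $\mathbb D$ is
\[
G_{\mathbb D}(z,w) = -\log|z-w| + \log|1 - z\bar w|,
\]
which vanishes on $\partial \mathbb D$ since $|z-w| = |1 - z\bar w|$ whenever $|w|=1$, as required for Dirichlet boundary conditions. The Neumann GFF on $\mathbb D$ has Green function (see~(5.8) in~\cite{LQGnotes}, or by the method of images/conformal transport from the half-plane) of the form
\[
G_N(z,w) = -\log|z-w| - \log|1 - z\bar w| + C,
\]
where the additive constant $C$ depends on the choice of normalization. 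Independence of $\tilde h$ and $\harmdi(h)$ then gives the formal covariance kernel of the harmonic part:
\[
K(z,w) := G_N(z,w) - G_{\mathbb D}(z,w) = -2\log|1 - z\bar w| + C.
\]
Observe that $K$, unlike $G_N$, is regular on the diagonal, as it must be since $\harmdi(h)$ is a classical random harmonic function.

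Since $\harmdi(h)$ is a priori defined only modulo constants, a genuine jointly Gaussian vector $(\harmdi(h)(z), \harmdi(h)(w))$ appears only after choosing a representative. Pinning by $\harmdi(h)(0) = 0$ amounts to replacing any representative $u$ by $u - u(0)$, and a direct bilinear computation gives
\[
\cov\bigl(\harmdi(h)(z), \harmdi(h)(w)\bigr) = K(z,w) - K(z,0) - K(0,w) + K(0,0) = -2\log|1 - z\bar w|,
\]
the constant $C$ (and indeed any normalization ambiguity of the form $K \mapsto K + \phi(z) + \phi(w) + c$) cancelling because $K(z,0) = K(0,w) = K(0,0) = C$. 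Mean zero follows because, choosing representatives $\psi_i$ of the harmonic basis vectors with $\psi_i(0)=0$, the series $\sum_i Y_i \psi_i$ converges a.s.\ to a harmonic function whose pointwise values are centred Gaussian limits of partial sums.

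The main point to watch is the constants ambiguity: $G_N$ is not uniquely pinned by the distributional definition of the Neumann GFF (and $G_N(0,0)$ is formally $-\infty$). This is precisely what the bilinear subtraction above absorbs, yielding the canonical and finite answer $-2\log|1-z\bar w|$.
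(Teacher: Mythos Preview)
Your proof is correct and follows exactly the approach the paper indicates: the paper does not give a detailed argument but simply states that the result is easy to deduce from the decomposition of \cref{L:harmext} together with the explicit Dirichlet and Neumann Green functions on $\mathbb D$ (citing (5.8) in \cite{LQGnotes} and Lemma~2.9 in \cite{GwynneMillerSun}). You have carried out precisely this computation, including the bilinear subtraction that removes the normalization ambiguity, so there is nothing to add.
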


We can use a coordinate change from the upper half plane to the unit disc, along with conformal invariance of Gaussian free field, to get the following bound on the variance of the harmonic part of the Neumann boundary GFF on the upper half plane.

\begin{lemma}
	Let $\harm(h)$ be the harmonic part of a Neumann boundary GFF on the upper half plane $\mb H$, normalised so that $\harm(h)(iy_0) = 0$ for some (fixed) $y_0 > 0$. Then, for any $z = x + iy \in \mb H$,
	\begin{equation*}
		\mb E \left[ \harm(h)(z)^2 \right] =-2 \log  \frac{4y_0y}{x^2 + (y+y_0)^2}.
% \leq -2 \log y + 2\log\left( x^2 + 4y_0^2 \right) - 2\log 4y_0.
	\end{equation*}
	\label{lem:half.plane.cov}
\end{lemma}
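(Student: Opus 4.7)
The plan is to reduce the claim to the disc case already established in the previous lemma by pulling back along an explicit conformal map $\phi:\mathbb{H}\to\mathbb{D}$ sending $iy_0$ to $0$. The key input is that the Neumann GFF is conformally invariant as a distribution modulo constants, so that if we pick $\tilde h := h\circ\phi^{-1}$ on $\mathbb D$, then $\tilde h$ is a Neumann GFF on $\mathbb D$. Moreover, since harmonicity is preserved under conformal maps, $\mathrm{Harm}_{\mathbb{D}}(\tilde h) = \mathrm{Harm}_{\mathbb H}(h)\circ\phi^{-1}$ up to an additive constant, and the normalization $\mathrm{Harm}_{\mathbb H}(h)(iy_0)=0$ translates into $\mathrm{Harm}_{\mathbb D}(\tilde h)(0)=0$ because $\phi(iy_0)=0$. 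Thus the normalizations agree and we can invoke the previous lemma without extra constant corrections.

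Concretely, I take the Möbius transformation
\[
\phi(z) = \frac{z-iy_0}{z+iy_0},
\]
which maps $\mathbb H$ conformally onto $\mathbb D$, sends the real line to the unit circle, and satisfies $\phi(iy_0)=0$. By the preceding lemma applied to $\tilde h$ with $w=z$,
\[
\mathbb E\bigl[\mathrm{Harm}_{\mathbb H}(h)(z)^2\bigr]
=\mathbb E\bigl[\mathrm{Harm}_{\mathbb D}(\tilde h)(\phi(z))^2\bigr]
= -2\log\bigl(1-|\phi(z)|^2\bigr).
\]

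It then remains to carry out the routine computation of $1-|\phi(z)|^2$. Writing $z=x+iy$, one has
\[
1-|\phi(z)|^2 \;=\; \frac{(z+iy_0)(\bar z-iy_0) - (z-iy_0)(\bar z+iy_0)}{(z+iy_0)(\bar z-iy_0)}\;=\;\frac{2iy_0(\bar z-z)}{(z+iy_0)(\bar z-iy_0)}.
\]
The numerator is $2iy_0(-2iy)=4y_0y$, and a direct expansion gives $(z+iy_0)(\bar z-iy_0)=|z|^2+iy_0(\bar z-z)+y_0^2=x^2+(y+y_0)^2$. Therefore
\[
1-|\phi(z)|^2 = \frac{4y_0 y}{x^2+(y+y_0)^2},
\]
which upon substitution yields the claimed formula.

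I do not expect any real obstacle in this proof: the only nontrivial point is to check that conformal invariance of the Neumann GFF preserves the pinning condition (which is immediate because $\phi(iy_0)=0$), so that the representative of $\tilde h$ for which the disc lemma applies is exactly the pullback of the chosen representative of $h$. Everything else is a direct application of the previous lemma and an algebraic identity for the Möbius map.
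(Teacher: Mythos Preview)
Your proposal is correct and follows essentially the same approach as the paper: both use the M\"obius map $z\mapsto (z-iy_0)/(z+iy_0)$ together with conformal invariance of the Neumann GFF to reduce to the disc covariance formula, and then compute $1-|\phi(z)|^2$ explicitly. The only cosmetic difference is that the paper pushes a disc GFF forward to $\mathbb{H}$ while you pull the half-plane GFF back to $\mathbb{D}$, which amounts to the same thing.
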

\begin{proof}
	Let $h'$ be a Neumann boundary GFF on the unit disc, $\mb D$, normalised so that
%	\begin{equation*}
$
		\harmdi(h')(0) = 0,
$
%	\end{equation*}
	and let $m_{y_0}$ be the M\"obius transformation
	\begin{equation*}
		m_{y_0}(z) = \frac{z - iy_0}{z + iy_0}.
	\end{equation*}
	Then $m_{y_0}:\mb H \to \mb D$ so that $m_{y_0}(iy_0) = 0$. Therefore, if we set
	%\begin{equation*}
	$
	h = h' \circ m_{y_0},
    $
	%\end{equation*}
	then $h$ is a Neumann boundary GFF on $\mb H$ and, by conformal invariance of the GFF and harmonic extensions, we see that for $z \in \mb H$,
	\begin{align*}
		\harm(h)(z) &= \harm(h' \circ m_{y_0})(z) = \harmdi(h')(m_{y_0}(z)).
	\end{align*}
	Therefore, with $z = iy_0$, we see that $\harm(h)(iy_0) = \harmdi(h')(m_{y_0}(iy_0)) = 0$. So, $h$ is normalised in the way that we want. We can now calculate its variance using the coordinate change and Lemma \ref{lem:disc.cov}, as follows. Let $z = x + iy \in \mb H$ with $y \in (0, y_0)$. By Lemma \ref{lem:disc.cov},
	\begin{equation}
		\mb E \left[ \harm(h)(x + iy)^2 \right] = \mb E \left[ \harmdi(h')(m_{y_0}(x + iy))^2 \right] = -2 \log| 1 - |m_{y_0}(x+iy)|^2 |. \label{eq:var}
	\end{equation}
	Expanding the term inside the $\log$ in \eqref{eq:var}, we find
	\begin{align}
		1 - |m_{y_0}(x + iy)|^2 = 1 - \frac{x^2 + (y-y_0)^2}{x^2 + (y+y_0)^2}
%
%		\nonumber &= \frac{(y+y_0)^2 - (y-y_0)^2}{x^2 + (y+y_0)^2} \\
		= \frac{4y_0y}{x^2 + (y+y_0)^2}.
		\label{eq:ma}
	\end{align}
	Substituting \eqref{eq:ma} into \eqref{eq:var}, we obtain the desired result.
%	\begin{align*}
%		\mb E \left[ \harm(h)(x + iy)^2 \right] &= -2 \log \left( \frac{4y_0y}{x^2 + (y+y_0)^2} \right) %\\
%		&= -2 \log y + 2\log\left( x^2 + (y + y_0)^2 \right) - 2\log 4y_0
%\\
%		&\leq -2 \log y + 2 \log(x^2+4y_0^2) - 2\log 4y_0,
%	\end{align*}
%as desired.
%	where we have used the assumption that $y < y_0$ to get the last inequality.
\end{proof}

\begin{corollary}\label{cor:imag.cov}
	Let $\harm(h)$ be the harmonic part of a Neumann boundary GFF on the upper half plane $\mb H$, normalised so that $\harm(h)(iy_0) = 0$ for some (fixed) $y_0 > 0$. Then, for any purely imaginary $z = iy \in \mb H$ with $y \in (0, y_0)$, we see that
	\begin{equation*}
		\mb E \left[ \harm(h)(iy)^2 \right] \leq 2\log y_0 - 2 \log y.
	\end{equation*}
\end{corollary}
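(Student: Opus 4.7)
The plan is to simply specialise the formula of \cref{lem:half.plane.cov} to the case $x = 0$ and then bound the resulting expression using the assumption $y \in (0,y_0)$.

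Setting $x = 0$ in the formula $\mb E[\harm(h)(x+iy)^2] = -2\log \frac{4y_0 y}{x^2+(y+y_0)^2}$ from \cref{lem:half.plane.cov} yields
\begin{equation*}
  \mb E \left[ \harm(h)(iy)^2 \right] = -2\log \frac{4y_0 y}{(y+y_0)^2} = 2\log \frac{(y+y_0)^2}{4y_0 y}.
\end{equation*}
To obtain the claimed bound, it suffices to show
\begin{equation*}
  \frac{(y+y_0)^2}{4y_0 y} \leq \frac{y_0}{y},
\end{equation*}
i.e., $(y+y_0)^2 \leq 4 y_0^2$, which since both sides are positive is equivalent to $y+y_0 \leq 2y_0$, i.e., $y \leq y_0$. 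This holds by the assumption $y \in (0, y_0)$, so taking $2\log$ of both sides gives the desired inequality
\begin{equation*}
  \mb E \left[ \harm(h)(iy)^2 \right] \leq 2\log \frac{y_0}{y} = 2\log y_0 - 2\log y.
\end{equation*}
There is no real obstacle here: the content of the estimate is contained in \cref{lem:half.plane.cov}, and the corollary is just a convenient monotone bound obtained by dropping the $(y+y_0)^2/(4y_0^2) \leq 1$ factor inside the logarithm.
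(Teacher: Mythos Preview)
Your proof is correct and follows essentially the same approach as the paper: both specialise \cref{lem:half.plane.cov}, use the bound $(y+y_0)^2 \le 4y_0^2$ valid for $y \le y_0$, and set $x=0$. The only cosmetic difference is the order of these two steps.
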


\begin{proof}
	Note that if $y< y_0$ in \cref{lem:half.plane.cov} then $ \E (\harm(h)(z)^2) \leq -2 \log y - 2\log 4y_0 + 2\log\left( x^2 + 4y_0^2 \right) .$
In particular setting $x = 0$ gives the result.
\end{proof}

	Let $K$ be a compact hull, and let $H = \mb H \setminus K$. Let $h$ be a Neumann boundary GFF on $\mb H$ with some normalisation.
% normalised so that its harmonic part vanishes at $iy_0$ for some $y_0 > 0$.
By \cref{L:harmext} we can consider write $h = \tilde h + u$ where $u$ is a harmonic function, and $\tilde h$ is a Dirichlet GFF. Applying the domain Markov property to $\tilde h$ inside $H$, we can write $\tilde h|_H = h'+ v$ where $v$ is harmonic in $H$ and $h'$ is a Dirichlet GFF on $H$. We then call $v +u =: \harmh(h)$ the harmonic extension of $h$ from $\partial H$ to $H$, so that
$$
h|_H = h' + \harmh(h),
$$
where $h'$ is a Dirichlet GFF on $H'$ and note that $h'$ is independent from $\harmh(h)$ viewed as a function up to constant. If we normalise $h$ so that $\harm(h) (z) = 0$ at some $z$, then $\harmh(h)$ (now viewed as a proper harmonic function) is independent of $h'$.

\begin{lemma}
Let $h, K$ be as above and assume that $h$ is normalised so that its harmonic part vanishes at $iy_0$ for some fixed $y_0>0$. Then for any $z = x + iy \in H$ with $y < y_0$, letting $\harmh(h)$ be the corresponding harmonic function,
%we know that $\harmh(h)(z)$ is a centred Gaussian random variable with variance
	\begin{equation*}
		\mb E \left[ \harmh(h)(z)^2 \right] \leq -3 \log (\dist(z, \partial H)) + 2\log(x^2 + 4y_0^2) + C,
	\end{equation*}
	where $C$ is a constant which depends on $y_0$ only.
	\label{lem:maybe}
\end{lemma}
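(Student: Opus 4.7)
The plan is to decompose $\harmh(h)$ into two independent Gaussian pieces that can be estimated separately. By \cref{lem:decomp} and \cref{lem:independence}, write $h = \tilde h + \harm(h)$, where $\tilde h$ is a Dirichlet GFF on $\H$ and $\harm(h)$ is its harmonic part on $\H$, normalised so that $\harm(h)(iy_0) = 0$, and these two pieces are independent. Applying the domain Markov property to the Dirichlet GFF $\tilde h$ with respect to the subdomain $H \subset \H$, we further write $\tilde h|_H = h' + v$, where $h'$ is a Dirichlet GFF on $H$ and $v$ is an independent harmonic function on $H$. Since $\harmh(h) = \harm(h) + v$ as a function on $H$, and $\harm(h)$ is independent of the pair $(h',v)$, the pointwise evaluations $\harm(h)(z)$ and $v(z)$ are centred independent Gaussians, so
\begin{equation*}
  \E[\harmh(h)(z)^2] = \E[\harm(h)(z)^2] + \E[v(z)^2].
\end{equation*}

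The first term is bounded by applying \cref{lem:half.plane.cov} directly: for $y < y_0$ we have $(y+y_0)^2 \leq 4y_0^2$, so
\begin{equation*}
  \E[\harm(h)(z)^2] \leq -2\log y + 2\log(x^2 + 4y_0^2) - 2\log(4y_0),
\end{equation*}
and since $\dist(z,\partial H) \le \dist(z,\partial \H) = y$, this yields $\E[\harm(h)(z)^2] \leq -2\log\dist(z, \partial H) + 2\log(x^2 + 4y_0^2) + C_1(y_0)$ for an explicit constant $C_1(y_0)$.

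For the second term, the main computational step is identifying $\E[v(z)^2]$. I would do this via circle averages. Let $\tilde h_\eps(z)$ and $h'_\eps(z)$ denote the circle averages at $z$ of radius $\eps < \dist(z, \partial H)$; since $v$ is harmonic, its own circle average equals $v(z)$, so $\tilde h_\eps(z) = h'_\eps(z) + v(z)$. Using the standard fact that the variance of a Dirichlet circle average of radius $\eps$ in a domain $D$ equals $-\log\eps + \log R(z;D)$, where $R(z;D)$ is the conformal radius, the $-\log \eps$ contributions cancel and
\begin{equation*}
  \E[v(z)^2] = \log R(z; \H) - \log R(z; H) \leq \log(2y_0) - \log\dist(z, \partial H),
\end{equation*}
where $R(z; \H) = 2y \leq 2y_0$ and $R(z; H) \geq \dist(z, \partial H)$ by the Schwarz lemma applied to a uniformising map of $H$ at $z$. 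Summing the two bounds gives the claimed inequality with $C = C_1(y_0) + \log(2y_0)$. The only real obstacle is pinning down $\E[v(z)^2]$; the circle-average identification above is cleaner than manipulating Green's functions directly because the divergent $-\log\eps$ terms cancel automatically.
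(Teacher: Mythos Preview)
Your argument is correct and follows essentially the same route as the paper: the same decomposition $h=\tilde h+\harm(h)$ together with the Markov property in $H$, the same circle-average identity for the Dirichlet pieces, and \cref{lem:half.plane.cov} for the $\harm(h)$ contribution. The only cosmetic difference is that the paper drops the nonnegative term $\var(h'_\eps(z))$ to get an inequality and then sets $\eps=\dist(z,\partial H)$, whereas you first compute $\E[v(z)^2]=\log R(z;\H)-\log R(z;H)$ exactly and then bound $R(z;H)\ge \dist(z,\partial H)$; both lead to the same estimate.
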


\begin{proof}
%%%%%%%%%%%%%%
\begin{comment}
	We know from Lemmas \ref{lem:decomp} and \ref{lem:independence} that we can write $h = \tilde h +  \harm(h)$, where $\tilde h$ is a zero boundary GFF on $\mb H$ and $\harm(h)$ is an independent harmonic function, determined uniquely. Using the Markov property of the zero boundary GFF we can also say that, restricting to $H$, we have
	\begin{equation*}
		\tilde h |_H = h' + u,
	\end{equation*}
	where $h'$ is a zero boundary GFF on $H$ and $u$ is an independent harmonic function. We can therefore see that
	\begin{align*}
		h|_H &= h' + u + \harm(h)|_H \\
		&= h' + \harmh(h).
	\end{align*}
	Because $u$ is independent of $h'$ and $\harm(h)$ is independent of $\tilde h = h' + u$, we see that $h'$ and $\harmh(h) = u + \harm(h)$ are independent.
	\end{comment}
%%%%%%%%%%%%%%%%%
Write $h|_H = h'+ \harmh(h)$ as above, where the two terms are independent and $h'$ is a Dirichlet GFF on $H$, and we also write $h = \tilde h + \harm(h)$.
		Let $\varepsilon < \dist(z, \partial H)$. Then we can take the circle average of $h$ in both expressions to see that
	\begin{equation*}
		h'_\varepsilon(z) + \harmh(h)(z) = \tilde h_\varepsilon(z) + \harm(h)(z).
	\end{equation*}
	By independence of both terms on the left hand side as well as on the right hand side, taking the variance, we have:
	\begin{equation}
		\var(\harmh(h)(z)) \leq \var(\tilde h_\varepsilon(z)) + \var(\harm(h)(z)).
		\label{eq:varvarvar}
	\end{equation}
	Now it is well known (see Lemma 2.2 in \cite{LQGnotes}) that $\var (\tilde h_\eps (z)) = - \log \eps + \log R(z; \H)$ where $R(z;D)$ denotes the conformal radius of a point $z$ in a simply connected domain $D$.
Using \ref{lem:half.plane.cov} and the fact that $y < y_0$ we obtain
%to bound the terms on the right hand side of \eqref{eq:varvarvar} to get
	\begin{equation*}
		\var(\harmh(h)(z)) \leq -\log \varepsilon + \log R(z; \mb H) -2\log y + 2 \log(x^2 + 4y_0^2) - 2\log 4y_0.
	\end{equation*}
Clearly, $\log R(z; \mb H)$ is uniformly bounded above for all $z = x + iy$ with $y \in [0, y_0]$. Let us write this upper bound along with the $-2\log 4y_0$ term as the constant $C$, so
	\begin{equation*}
		\var(\harmh(h)(z)) \leq -\log \varepsilon -2\log y + 2 \log(x^2 + 4y_0^2) + C.
	\end{equation*}
	Now let us take $\varepsilon$ as large as we can, i.e.~$\varepsilon = \dist(z, \partial H)$. Noting also that, as $H$ is a subset of $\mb H$, $\dist(z, \partial \mb H) = y \geq \dist(z, \partial H)$, we find
	\begin{equation*}
		\var(\harmh(h)(z)) \leq -3\log \dist(z, \partial H) + 2 \log(x^2 + 4y_0^2) + C,
	\end{equation*}
	as desired.
\end{proof}

\subsection{Coupling reverse SLE and Neumann boundary GFF}\label{subsec:coupling}

We now set out the coupling between reverse SLE and the Neumann boundary GFF, discovered by Sheffield in \cite{zipper}. Throughout, fix $\kappa > 0$ and $Q = \frac{2}{\sqrt \kappa} + \frac{\sqrt \kappa}{2}$.

\begin{thm}[Liouville quantum gravity coupling, or reverse coupling]\label{thm:coupling}
Let $(f_t)$ be the reverse SLE$_\kappa$ flow as defined by \eqref{eq:reverse}.
Let $h$ be a Neumann boundary GFF on $\mb H$, independent of $(f_t)$. %normalised so that the harmonic part vanishes at $iy_0$ for some $y_0 > 0$ to be specified later.
For $t > 0$, let
\begin{equation}
	h_t = h \circ f_t + \frac{2}{\sqrt \kappa}\log |f_t|.
	\label{eq:ht}
\end{equation}
Then
\begin{equation*}
	h_t + Q\log |f_t'| \overset d = h_0
\end{equation*}
where the equality is in distribution for the two sides of the identity, viewed as (Schwartz) distributions modulo constants.
\end{thm}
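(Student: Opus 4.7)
Since both sides live in the space of distributions modulo constants, it suffices to verify that for every smooth, compactly supported, \emph{mean-zero} test function $\rho$ on $\mb H$, the pairing $M_t(\rho) := (h_t + Q\log|f_t'|, \rho)$ has the same law as $M_0(\rho) = (h_0, \rho)$ for each fixed $t>0$. My plan is to show that $\E[e^{i\lambda M_t(\rho)}]$ is constant in $t$ for every $\lambda \in \R$, via an It\^o calculation along the reverse Loewner flow.

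The first move is to exploit the independence of $h$ and $(\xi_s)_{s \le t}$. Conditionally on $f_t$, the quantity $(h \circ f_t, \rho)$ is centered Gaussian with variance
$$V_t(\rho) \ := \ \iint G_{\mb H}(f_t(z), f_t(w))\,\rho(z)\rho(w)\, dz\, dw,$$
where $G_{\mb H}(z,w) = -\log|z-w| - \log|z-\bar w|$ is the Neumann Green function on $\mb H$ (the free additive constant is harmless because $\int \rho = 0$). Setting $A_t(\rho) := (\tfrac{2}{\sqrt\kappa}\log|f_t| + Q\log|f_t'|,\, \rho)$, which is deterministic given $\xi$, one obtains
$$\E\bigl[e^{i\lambda M_t(\rho)}\bigr] \ = \ \E\!\left[\exp\bigl(i\lambda A_t(\rho) - \tfrac{\lambda^2}{2}V_t(\rho)\bigr)\right],$$
so it suffices to show that $e^{Z_t}$, with $Z_t := i\lambda A_t(\rho) - \tfrac{\lambda^2}{2} V_t(\rho)$, is a (bounded) local martingale, i.e.\ that $dZ_t + \tfrac{1}{2}d\langle Z\rangle_t$ has no drift.

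This reduces to two It\^o computations along \eqref{eq:reverse} and the companion ODE $df_t'(z) = (2/f_t(z)^2)f_t'(z)\,dt$. For the linear part, a direct computation gives
$$d\log|f_t(z)| = \Re\!\left[-\tfrac{4+\kappa}{2 f_t(z)^2}\right]\!dt - \Re\!\left[\tfrac{1}{f_t(z)}\right]\!d\xi_t, \qquad d\log|f_t'(z)| = \Re\!\left[\tfrac{2}{f_t(z)^2}\right]\!dt,$$
so the drift of $A_t(\rho)$ has integrand $\Re[1/f_t(z)^2]\,(2Q - 4/\sqrt\kappa - \sqrt\kappa)$, which vanishes \emph{exactly} because $Q = 2/\sqrt\kappa + \sqrt\kappa/2$. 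For the quadratic part, because the driving noise is common to every $f_t(z)$, the processes $f_t(z) - f_t(w)$ and $f_t(z) - \overline{f_t(w)}$ are of bounded variation in $t$, so $V_t(\rho)$ has no martingale part, and the identity $1/f + 1/\bar f = 2\Re(1/f)$ yields
$$dV_t(\rho) \ = \ -4\iint \Re[1/f_t(z)]\,\Re[1/f_t(w)]\,\rho(z)\rho(w)\, dz\, dw\, dt \ = \ -\, d\langle A_t(\rho)\rangle_t,$$
the last equality using $d\langle\xi\rangle_t = \kappa\, dt$ together with the noise coefficient $-(2/\sqrt\kappa)\Re[1/f_t(z)]$ computed above. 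Therefore $dZ_t + \tfrac{1}{2}d\langle Z\rangle_t = 0$, so $e^{Z_t}$ is a local martingale; since $|e^{Z_t}|\le 1$ it is a true martingale, and the identity in law follows.

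The main obstacle I anticipate is a minor integrability/localisation issue: $\Re[1/f_t(z)]$ could blow up as $f_t(z) \to 0$, so the It\^o calculation should first be carried out up to a stopping time at which $|f_t(z)|$ leaves a compact subset of $\mb H$ for $z$ in the support of $\rho$; \cref{lem:im.bound,lem:real.bound} ensure these stopping times tend to infinity, whence a standard limiting argument closes the proof. A secondary bookkeeping point is that the reflection term $-\log|z-\bar w|$ in the Neumann Green function contributes on equal footing with the diagonal term, which is precisely what makes the factor of $4$ in the cancellation $dV_t = -d\langle A_t(\rho)\rangle_t$ come out right.
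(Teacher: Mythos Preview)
Your proposal is correct and is precisely the It\^o computation that the paper alludes to (the paper does not give a detailed proof, merely stating that ``the proof is a fairly simple application of It\^o's formula'' and citing \cite{LQGnotes}). Your identification of the drift cancellation $2Q - 4/\sqrt\kappa - \sqrt\kappa = 0$ and of the quadratic variation identity $dV_t(\rho) = -d\langle A_\cdot(\rho)\rangle_t$ are exactly the two ingredients of that calculation. One small remark: your localisation worry is in fact unnecessary, since \cref{L:im_rev} gives $\Im f_t(z) \ge \Im z$ for all $t\ge 0$, so $|1/f_t(z)|$ is uniformly bounded on $\mathrm{supp}\,\rho \times [0,\infty)$ and no stopping is needed.
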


The proof is a fairly simple application of It\^o's formula, see Theorem 6.1 in \cite{LQGnotes}. (There the statement is interpreted as expressing a domain Markov property for certain random surfaces explored along an SLE interface.)

\begin{corollary}\label{cor:coord}
	In the same setting as Theorem \ref{thm:coupling}, there exists a constant (in space) $b_t$ such that if $h$ is normalised so that its harmonic part vanishes at $i y_0$ for some fixed $y_0>0$,
	\begin{equation*}
	|f_t'(iy)| = \left( \frac{y}{|f_t(iy)|} \right)^{\frac{2}{Q\sqrt \kappa}}\exp\left( \frac{1}{Q}\left( \harm(h')(iy) - \harm(h\circ f_t)(iy) + b_t \right) \right),
	\end{equation*}
	where $h' \overset d = h$ (viewed as distributions with a normalisation).
%is also a Neumann boundary GFF, normalised so that the harmonic part vanishes at $iy_0$.
\end{corollary}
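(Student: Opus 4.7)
The plan is to derive the identity as a direct rearrangement of the LQG coupling (\cref{thm:coupling}), taking harmonic extensions on both sides and carefully tracking the random additive constant $b_t$ that arises because the coupling only holds modulo constants. First, I would realize the distributional identity of \cref{thm:coupling}, namely
$$h \circ f_t + \frac{2}{\sqrt{\kappa}}\log|f_t| + Q\log|f_t'| \overset{d}{=} h + \frac{2}{\sqrt{\kappa}}\log|\cdot|$$
(modulo constants), on a common probability space by denoting the field on the right-hand side by $h'$, so that $h' \overset{d}{=} h$ and the two sides agree modulo constants on this space. Fixing the normalizations $\harm(h)(iy_0) = \harm(h')(iy_0) = 0$, the remaining ambiguity between these representatives is a random additive constant $b_t \in \R$, yielding
$$
h \circ f_t + \frac{2}{\sqrt{\kappa}}\log|f_t| + Q\log|f_t'| = h' + \frac{2}{\sqrt{\kappa}}\log|\cdot| + b_t
$$
as distributions on $\H$.

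Second, I would apply the harmonic extension operator from $\partial \H$ to both sides. The key point is that $\log|f_t'|$ and $\log|f_t|$ are already harmonic on $\H$: the former as the real part of the holomorphic function $\log f_t'$, and the latter because $f_t$ is non-vanishing on $\H$ (since $f_t$ maps $\H$ conformally into itself and $0 \in \partial \H$). Similarly $\log|\cdot|$ is harmonic on $\H$. These harmonic functions are their own harmonic extension, while the genuinely distributional pieces $h \circ f_t$ and $h'$ contribute $\harm(h \circ f_t)$ and $\harm(h')$ respectively, and the constant $b_t$ extends to itself. Evaluating the resulting identity pointwise at $z = iy$ yields
$$
\harm(h \circ f_t)(iy) + \frac{2}{\sqrt{\kappa}}\log|f_t(iy)| + Q\log|f_t'(iy)| = \harm(h')(iy) + \frac{2}{\sqrt{\kappa}}\log y + b_t.
$$

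Finally, solving for $Q\log|f_t'(iy)|$, exponentiating, and taking a $Q$-th root produces the stated identity. The main (modest) obstacle is the first step: \cref{thm:coupling} is stated only as an equality in distribution modulo constants, and one must argue that after fixing the normalizations on both sides, the identity can be realized on a common probability space as a literal pointwise equality of distributions with a random additive constant $b_t$ absorbing the residual ambiguity. Once this is in place, the harmonic-extension and evaluation steps are essentially bookkeeping.
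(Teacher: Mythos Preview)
Your proposal is correct and follows essentially the same route as the paper: start from the coupling identity modulo constants, fix the normalisation at $iy_0$ to introduce the random constant $b_t$, observe that $\log|f_t'|$, $\log|f_t|$, and $\log|\cdot|$ are already harmonic so that passing to harmonic extensions replaces $h'$ and $h\circ f_t$ by $\harm(h')$ and $\harm(h\circ f_t)$, then evaluate at $iy$ and exponentiate. The paper phrases the harmonic-extension step as ``integrating against the Poisson kernel'' and deduces harmonicity of $h'-h\circ f_t$ from that of the other terms, but this is the same argument.
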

\begin{proof}
		Theorem \ref{thm:coupling} tells us that $h_t + Q\log |f_t'| \overset d = h_0$ modulo an additive constant. Hence if we fix the normalisation of $h$ so that its harmonic part vanishes a $i y_0$, we can write
\begin{equation}
	h_t + Q\log|f_t'| - b_t \overset d = h_0
	\label{eq:dist}
\end{equation}
for some constant $b_t$, where $h_t = h \circ f_t +  (2/\sqrt{\kappa}) \log |f_t|$ as in \eqref{eq:ht}, and here the equality is as distributions. Let
\begin{equation}
	h_0' = h_t + Q\log|f_t'| - b_t
	\label{eq:hprime}
\end{equation}
so that $h_0' \overset d = h_0$, that is, $h_0' = h ' + \frac{2}{\sqrt \kappa}\log|z|$ where $h'$ is a Neumann boundary GFF on $\mb H$ normalised so that its harmonic part vanishes at $i y_0$.

Rearranging \eqref{eq:hprime} gives us for $z \in \H$,
\begin{align}
	\nonumber Q\log |f_t'(z)| &= (h_0' - h_t)(z) + b_t \\
	\nonumber&= (h' - h\circ f_t)(z) + \frac{2}{\sqrt \kappa}\log|z| - \frac{2}{\sqrt \kappa} \log |f_t(z)| + b_t \\
	&= (h' - h\circ f_t)(z) + \frac{2}{\sqrt \kappa}\log \frac{|z|}{|f_t(z)|} + b_t.
	\label{eq:coordchange}
\end{align}
The left hand side of \eqref{eq:coordchange} is harmonic in $\mb H$, as are the last two terms on the right hand side. The terms $h'$ and $h \circ f_t$ are not defined pointwise, but their difference is. Furthermore, since all other terms are harmonic, the difference $h' - h\circ f_t$ must also be harmonic.

Integrating with respect to the Poisson kernel on the real line lets us look at the harmonic extensions of these function from $\mb R$ to $\mb H$. As both sides of the equation are harmonic already, this has no effect on the functions. However, linearity of integration against the Poisson kernel lets us say that
\begin{equation}
	Q\log |f_t'(z)| = \harm(h')(z) - \harm(h\circ f_t)(z) + \frac{2}{\sqrt \kappa}\log \frac{|z|}{|f_t(z)|} + b_t,
	\label{eq:harmcoordchange}
\end{equation}
so that taking the exponential, we get
\begin{equation*}
	|f_t'(iy)| = \left( \frac{y}{|f_t(iy)|} \right)^{\frac{2}{Q\sqrt \kappa}}\exp\left( \frac{1}{Q}\left( \harm(h')(iy) - \harm(h\circ f_t)(iy) + b_t \right) \right),
	\label{eq:ft}
\end{equation*}
as desired.
\end{proof}

\section{Proof of Theorem \ref{T:mainq}}\label{sec:proof}

\begin{comment}
We restate the theorem:
\begin{thm}
	Let $\kappa \neq 8$, and let $\hat f_t $ be the centred inverse of the Loewner flow. Then there exist constants $\varepsilon > 0$, $\delta > 0$ and $C > 0$ such that
	\begin{equation}
		\mb P \left[ |\hat f'_t(iy)| > y^{-(1-\varepsilon)} \right] \leq C y^{2 + \delta}
		\label{rs:eq:pbound}
	\end{equation}
	for all $t \in [0,1]$ and $y \in (0,1)$.
	\label{rs:thm:tail2}
\end{thm}
\end{comment}

%Thanks to the distributional equality between the reverse flow, $f_t$, and the centered inverse, $\hat f_t$ shown in
By Lemma \ref{lem:relations}, it suffices to prove the following:

\begin{prop}\label{prop:prob1}
	Let $(f_t)$ be a reverse $SLE_\kappa$ for $\kappa > 0$ and $\kappa \neq 8$, coupled with a Neumann boundary GFF as in Corollary \ref{cor:coord} and let $(b_t)_{t\geq0}$ be the coupling constants.
%Finally, let $(\xi_t)_{t \geq 0}$ be the driving function of the reverse SLE $f$.
Then for all $\delta > 0$, there exist $\eps > 0$ and $C > 0$ such that, for all $t \in [0,1]$ and $y \in [0,1]$
	\begin{equation*}
		\mb P \left[ |f_t'(iy)| > y^{-(1-\varepsilon)} \right] \leq Cy^{q-\delta},
	\end{equation*}
where as before, $q = 4/\kappa + \kappa/16 + 1$.
\end{prop}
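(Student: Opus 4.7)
The plan is to bound the probability via Markov's inequality applied to a high moment $\E[|f_t'(iy)|^p]$ with $p>0$ to be chosen, and then use Corollary \ref{cor:coord} to express this moment as an expectation of an exponential of two correlated Gaussian-like harmonic extensions. The unknown correlation between these two fields, which essentially encodes the full SLE, is handled by a naive but efficient application of H\"older's inequality.

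Concretely, first I would write $\mb P[|f_t'(iy)|>y^{-(1-\eps)}] \le y^{(1-\eps)p} \E[|f_t'(iy)|^p]$. Normalising $h$ at $iy_0=i$, Corollary \ref{cor:coord} gives
\[
|f_t'(iy)|^p = \left(\frac{y}{|f_t(iy)|}\right)^{2p/(Q\sqrt\kappa)} \exp\!\Big(\tfrac{p}{Q}(H_1(iy)-H_2(iy)+b_t)\Big),
\]
where $H_1=\harm(h')$ and $H_2 = \harm(h\circ f_t)$. Since $\Im f_t(iy)\ge y$ by \cref{lem:im.bound} and $p>0$, the prefactor is $\le 1$. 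For the exponential, I would apply H\"older with conjugate exponents $\alpha,\beta$ to decouple:
\[
\E\!\left[e^{(p/Q)(H_1-H_2+b_t)}\right] \le \E\!\left[e^{\alpha p H_1/Q}\right]^{1/\alpha}\,\E\!\left[e^{\beta p(-H_2+b_t)/Q}\right]^{1/\beta}.
\]
The first factor is a plain Gaussian moment: since $H_1$ is centred Gaussian with variance at most $-2\log y + O(1)$ by \cref{cor:imag.cov}, it contributes $Cy^{-\alpha p^2/Q^2}$. For the second factor I would condition on the reverse flow $(f_s)_{s\le t}$; conditional on this $\sigma$-algebra, $H_2$ and (the field-dependent part of) $b_t$ are jointly Gaussian, so the conditional exponential moment is computed by completing the square. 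One then integrates out the flow, using \cref{lem:maybe} together with Koebe's $1/4$ theorem (\cref{cor:fprime.bound}) and \cref{lem:real.bound} to bound $\dist(f_t(iy),\partial H_t)$ and $\Re f_t(iy)$, and using \cref{lem:im.bound} and \cref{cor:fprime.bound} at the normalising point $iy_0=i$ to bound the deterministic parts of $b_t$.

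Putting the pieces together yields $\E[|f_t'(iy)|^p] \le C y^{-A(p,\alpha)}$ for an explicit function $A$, and Markov gives an exponent $p(1-\eps)-A(p,\alpha)$. Optimising over $p$ and $\alpha$ (and letting $\eps,\delta \downarrow 0$) should produce precisely $q=4/\kappa + \kappa/16 + 1$, matching the Rohde--Schramm derivative exponent; the fact that $q>2$ unless $\kappa=8$ is what will eventually drive \cref{T:main} and hence the Rohde--Schramm theorem.

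The main obstacle is controlling the $H_2$-factor together with the coupling constant $b_t$. The variance bound of \cref{lem:maybe} applied at $z=f_t(iy)$ introduces $\dist(f_t(iy),\partial H_t)$, which by Koebe is comparable to $y|f_t'(iy)|$ -- exactly the quantity we are trying to estimate, threatening a circular argument. I expect this to be handled either by exploiting cancellation between $H_2$ and $b_t$ (so that only bounds involving $f_t$ at the normalising point $iy_0$, rather than at $iy$, actually enter), or by restricting to an event on which $|f_t'(iy)|$ is not too small and paying only a negligible error. The balance chosen between $\alpha$ and $\beta$ must be fine-tuned precisely so that the resulting Gaussian variance factors combine to the correct $q$.
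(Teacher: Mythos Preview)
Your overall architecture (Corollary~\ref{cor:coord} $+$ Markov $+$ H\"older $+$ optimise) is exactly the paper's, but two concrete steps in your plan would prevent you from reaching the sharp exponent $q$, and your proposed handling of $b_t$ is not the one that works.

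First, bounding the prefactor $(y/|f_t(iy)|)^{2p/(Q\sqrt\kappa)}$ by $1$ is fatal: this throws away a factor $y^{2p/(Q\sqrt\kappa)}$ which is essential. If you optimise over $p$ without it you get at best $Q^2/4=1/\kappa+\kappa/16+1/2$, strictly less than $q$. The paper instead keeps the indicator of the event $\{|f_t'(iy)|>y^{-(1-\eps)}\}$ inside the Markov bound and observes that \emph{on this very event} Koebe's $1/4$ theorem gives $|f_t(iy)|\ge \dist(f_t(iy),\partial H_t)\ge \tfrac{y}{4}|f_t'(iy)|\ge y^\eps/4$, so the prefactor is $\le Cy^{2p(1-\eps)/(Q\sqrt\kappa)}$.

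Second, the circularity you identify in the $H_2$-factor is resolved by the same mechanism, not by cancellation with $b_t$: on the event $\{|f_t'(iy)|>y^{-(1-\eps)}\}\cap\{\sup_{[0,1]}|\xi|\le y^{-\eps}\}$, the Koebe bound makes $\dist(f_t(iy),\partial H_t)\ge y^\eps/4$ and Lemma~\ref{lem:real.bound} makes $|\Re f_t(iy)|\le 2y^{-\eps}$, so Lemma~\ref{lem:maybe} gives $\var(H_2(iy)\,|\,\cF)\le C-7\eps\log y$. Thus the conditional $H_2$-moment in the H\"older split contributes only $y^{-O(\eps)}$ and is essentially free; all the genuine $y$-power comes from the prefactor and from the $H_1$-variance. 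Optimising $a\mapsto a(1+2/(Q\sqrt\kappa)-a/Q^2)$ then yields exactly $q$.

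Finally, $b_t$ is not absorbed into the $H_2$ computation. The paper decomposes
\[
\mb P[|f_t'(iy)|>y^{-(1-\eps)}]\le \mb P[\bar\cA]+\mb P[b_t>-\eps\log y]+\mb P\Big[\sup_{[0,1]}|\xi|>y^{-\eps}\Big],
\]
where $\bar\cA$ is the intersection of all three constraints. On $\bar\cA$ one simply uses $e^{b_t/Q}\le y^{-\eps/Q}$. The remaining two probabilities are shown to decay faster than any polynomial: for $b_t$ one proves (by evaluating \eqref{eq:harmcoordchange} at the pinned point $iy_0$ and using Lemmas~\ref{lem:im.bound}, \ref{lem:real.bound}, \ref{lem:maybe} and Corollary~\ref{cor:fprime.bound}) that $\E[e^{\lambda b_t}]<\infty$ uniformly in $t\in[0,1]$ for every $\lambda>0$; for $\sup|\xi|$ this is just the reflection principle. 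Your suggestion to handle $H_2$ and $b_t$ jointly by ``completing the square'' would re-introduce the uncontrolled quantity $\dist(f_t(iy),\partial H_t)$ without the event to tame it.
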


\begin{proof}
	Let $(\xi_t)_{t\ge 0}$ be the driving function of the reverse SLE$_\kappa$ flow. Let us fix $\eps$ for now. Then note that
%We need to break the probability up into events which are relatively easy to deal with. A simple union bound lets us say that
	\begin{align}
		\mb P \left[ |f_t'(iy)| > y^{-(1-\varepsilon)} \right] &\leq \mb P \left[ |f_t'(iy)| > y^{-(1-\varepsilon)}, \: b_t \leq -\varepsilon \log y, \: \sup_{t \in [0,1]} |\xi_t | \leq y^{-\varepsilon} \right] +  \label{mainterm}\\
		& \hspace{2em} + \mb P\left[ b_t > -\varepsilon \log y \right] + \mb P\left[ \sup_{t \in [0,1]} |\xi_t| > y^{-\varepsilon} \label{minorterm}\right].
	\end{align}
The bulk of the work consists in showing that the first term \eqref{mainterm} decays as $C y^{q- \delta}$, which is carried in Section \ref{subsec:prob}, Proposition \ref{prop:prob} in particular.

We separately show that the coupling constant $b_t$ has sub-exponentially decaying tail in Section \ref{sec:coupling.constant}, giving us the arbitrary polynomial decay that we need here. Finally, the reflection principle implies that the supremum and the infimum of a Brownian motion over a finite time interval both have Gaussian and hence sub-exponential tails, and so the third term decays faster than any polynomial as $y \to 0$. These two arguments therefore imply that \eqref{minorterm} decays faster than any polynomial as $y \to 0$.
\end{proof}

\subsection{Bounding the coupling constant}\label{sec:coupling.constant}
The aim of this section is to prove the upper bound that we need for the coupling constant $b_t$ that was introduced in Corollary \ref{cor:coord}. We want to show that it has the following polynomial upper tail:
\begin{prop}\label{lem:bt}
	Suppose $y_0 > 2\sqrt{2}$. Then the constant $b_t$ from Corollary \ref{cor:coord} has sub-exponential decay i.e.~for any $\lambda > 0$ there exists some constant $C_\lambda$ depending only on $\lambda$ (and on the choice of $y_0 > 2\sqrt{2}$) such that, for all $x > 0$ and all $0 \le t \le 1$, we have:
	\begin{equation*}
		\mb P \left[ b_t > x \right] \leq C_\lambda e^{-\lambda x}.
	\end{equation*}
	\end{prop}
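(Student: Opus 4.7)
The strategy is to solve for $b_t$ explicitly from the identity \eqref{eq:harmcoordchange} derived in the proof of Corollary \ref{cor:coord}, and then show that each resulting term has at least sub-exponential upper tail. Evaluating \eqref{eq:harmcoordchange} at the normalisation point $z = iy_0$ and using $\harm(h')(iy_0) = 0$, we obtain
\begin{equation*}
b_t = Q\log|f_t'(iy_0)| + \frac{2}{\sqrt{\kappa}}\log\frac{|f_t(iy_0)|}{y_0} + \harm(h \circ f_t)(iy_0).
\end{equation*}

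The first two terms are measurable with respect to the driving Brownian motion $(\xi_s)$. For the first, Corollary \ref{cor:fprime.bound} gives $|f_t'(iy_0)| \le (4/y_0)\sqrt{4t+y_0^2}$, which for $t \in [0,1]$ is bounded by a constant depending only on $y_0$. For the second, Lemmas \ref{lem:im.bound} and \ref{lem:real.bound} combine to give $|f_t(iy_0)| \le 2\sup_{s \le t}|\xi_s| + \sqrt{4+y_0^2}$, and since the reflection principle yields a Gaussian tail for $\sup_{s \le 1}|\xi_s|$, this second term has a double-exponential upper tail (far better than sub-exponential).

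The main work concerns the third term. By conformal invariance of harmonic extensions, $\harm(h \circ f_t)(iy_0) = \harmh(h)(f_t(iy_0))$ with $H = H_t = f_t(\mb H)$. Conditionally on $\xi$, this is a centred Gaussian whose variance $\sigma_t^2$ is controlled exactly as in the proof of Lemma \ref{lem:maybe}:
\begin{equation*}
\sigma_t^2 \le -\log \dist(f_t(iy_0), \partial H_t) + \log R(f_t(iy_0);\mb H) + \var\bigl(\harm(h)(f_t(iy_0))\bigr).
\end{equation*}
The crucial geometric input is the assumption $y_0 > 2\sqrt{2}$: since $\hcap(K_t) = 2t \le 2$ for $t \in [0,1]$, Lemma \ref{lem:height.hcap} gives $\height(K_t) \le 2\sqrt{2}$, while $\Im(f_t(iy_0)) \ge y_0$ by Lemma \ref{lem:im.bound}. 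Hence $\dist(f_t(iy_0), \partial H_t) \ge y_0 - 2\sqrt{2} > 0$, a positive constant, so both $-\log \dist$ and $\log R(z;\mb H) = \log(2 \Im z)$ (with $\Im z \le \sqrt{4+y_0^2}$) are bounded. Lemma \ref{lem:half.plane.cov} then bounds $\var(\harm(h)(f_t(iy_0)))$, whose only non-trivial dependence is through $|\Re f_t(iy_0)|^2$, and Lemma \ref{lem:real.bound} yields $\sigma_t^2 \le C_{y_0}\bigl(1 + \log(1+\sup_{s\le 1}|\xi_s|)\bigr)$.

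Splitting on the event $A_x = \{\sup_{s\le 1}|\xi_s| \le x\}$ and applying the standard Gaussian tail bound conditionally on $\xi$,
\begin{equation*}
\mb P\bigl[\harmh(h)(f_t(iy_0)) > x\bigr] \le \mb P[A_x^c] + \exp\!\bigl(-x^2/(2 C_{y_0}\log(1+x))\bigr) \le C\,e^{-x^2/(2\kappa)} + \exp\!\bigl(-x^2/(C' \log x)\bigr),
\end{equation*}
both of which decay faster than $e^{-\lambda x}$ for any $\lambda > 0$. A union bound with the earlier estimates on the deterministic terms then yields the claim. The main obstacle is in Step 3/4: Lemma \ref{lem:maybe} is stated for $y < y_0$, which fails here because $\Im f_t(iy_0) \ge y_0$; the hypothesis $y_0 > 2\sqrt{2}$ in the statement is precisely what makes the height-versus-capacity argument work, bypassing this restriction.
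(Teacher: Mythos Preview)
Your proof is correct and follows essentially the same route as the paper: both evaluate \eqref{eq:harmcoordchange} at $iy_0$ to express $b_t$ as a sum of three terms, use Corollary~\ref{cor:fprime.bound} and Lemmas~\ref{lem:im.bound}, \ref{lem:real.bound} for the SLE terms, and crucially invoke Lemma~\ref{lem:height.hcap} together with $\Im f_t(iy_0)\ge y_0$ to get $\dist(f_t(iy_0),\partial H_t)\ge y_0-2\sqrt2>0$, so the conditional variance of the harmonic term grows only logarithmically in $\sup_{s\le 1}|\xi_s|$. The only packaging difference is that the paper bounds $\mb E[e^{\lambda b_t}]$ directly via Markov, whereas you split on $\{\sup_s|\xi_s|\le x\}$ and apply a conditional Gaussian tail bound; your observation that the hypothesis $y<y_0$ of Lemma~\ref{lem:maybe} formally fails at $f_t(iy_0)$ is well spotted (the paper applies the lemma there without comment), and your workaround via the exact formula in Lemma~\ref{lem:half.plane.cov} together with $y_0\le\Im f_t(iy_0)\le\sqrt{4+y_0^2}$ is exactly what is needed.
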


\begin{proof}
	Let $\lambda > 0$. By Markov's inequality it suffices to check that $\mb E \left[ e^{\lambda b_t} \right]$ is finite for all $\lambda > 0$, with a uniform bound for $t\in[0,1]$. We can rearrange \eqref{eq:harmcoordchange} to see that, for $z \in \H$,
	\begin{equation*}
		b_t = \harm(h \circ f_t)(z) - \harm(h')(z) + Q\log|f'_t(z)| + \frac{2}{\sqrt \kappa}\log \frac{|f_t(z)|}{|z|}.
	\end{equation*}
Taking $z = iy_0$, the point where $\harm(h')$ vanishes (which is how $b_t$ is chosen), we can see that
	\begin{equation*}
		b_t = \harm(h \circ f_t)(iy_0) + Q\log|f'_t(iy_0)| + \frac{2}{\sqrt \kappa}\log \frac{|f_t(iy_0)|}{y_0}.
	\end{equation*}
	Exponentiating and recalling that $f_t$ is independent of $h$, we find that
	\begin{equation}
		\mb E \left[ e^{\lambda b_t} | f_t\right]
		= |f'_t(iy_0)|^{\lambda Q} \left( \frac{|f_t(iy_0)|}{y_0} \right)^{\frac{2 \lambda}{\sqrt \kappa}} \mb E \left[ \exp \left( \lambda \harm (h \circ f_t)(iy_0) \right) | f_t\right].
		\label{eq:ebt}
	\end{equation}
	We can use Lemma \ref{lem:maybe} to bound the conditional expectation $\mb E \left[ \exp \left( \lambda \harm (h \circ f_t)(iy_0) \right) | f_t \right]$, using the fact that $\harm(h \circ f_t)(iy_0) = \text{Harm}_{H_t}(h)(f_t(iy_0))$, as follows:
	\begin{align}
		\nonumber&\mb E \left[ \exp\left( \lambda \harm(h \circ f_t)(iy_0) \right)| f_t  \right] \leq \\
		&\hspace{2em}\leq\exp\left( \frac{\lambda^2}{2} \left( -3 \log(\dist(f_t(iy_0),\partial H_t)) + 2 \log(\Re(f_t(iy_0))^2 + 4y_0^2) + C \right) \right).
		\label{eq:harmexp}
	\end{align}
	Now we can use the fact that $\Im(f_t(iy_0))$ is nondecreasing and \cref{lem:height.hcap}, to deduce
	\begin{align}
		\nonumber \dist(f_t(iy_0), \partial H_t) &\geq \Im(f_t(iy_0)) - \height(K_t)
		\geq y_0 - 2\sqrt{2t} \geq y_0 - 2\sqrt 2,
		\label{eq:dist.bound}
	\end{align}
	and by choosing any $y_0 > 2 \sqrt{2}$ the right hand side positive.
	% \eqref{rs:eq:y0} ensuring that this is strictly positive.
	
	%Using the inequality \eqref{eq:dist.bound} in \eqref{eq:dist.bound} we get
	%\begin{equation}
	%	\mb E_h \left[ \exp\left( \lambda \harm(h \circ f_t)(iy_0) \right) \right] \leq (y_0 - 2\sqrt 2)^{-\frac{3\lambda^2}{2}}(\Re(f_t(iy_0))^2 + 4 y_0^2)^{\lambda^2} e^{\frac{\lambda^2 C}{2}}.
	%	\label{eq:harmexp2}
	%\end{equation}
	%Substituting \eqref{eq:harmexp2} into \eqref{eq:ebt} gives us the inequality
	Consequently, we obtain
	\begin{equation}
		\mb E\left[ e^{\lambda b_t}  | f_t \right] \leq |f_t'(iy_0)|^{\lambda Q} \left( \frac{|f_t(iy_0)|}{y_0} \right)^{\frac{2\lambda}{\sqrt \kappa}}(y_0 - 2\sqrt 2)^{-\frac{3\lambda^2}{2}}(\Re(f_t(iy_0))^2 + 4 y_0^2)^{\lambda^2} e^{\frac{\lambda^2 C}{2}}.
		\label{eq:btbound2}
	\end{equation}
	Corollary \ref{cor:fprime.bound} lets us bound the first term in \eqref{eq:btbound2} by
	\begin{equation*}
		|f_t'(iy_0)| \leq \frac{4}{y_0}\sqrt{y_0^2 + 4}.
	\end{equation*}
	Lemma \ref{lem:im.bound} lets us bound the second term in \eqref{eq:btbound2} by
	\begin{align*}
		|f_t(iy_0)| &\leq \sqrt{\Re(f_t(iy_0))^2 + 4t + y_0^2 }  \leq \sqrt{\Re(f_t(iy_0))^2 +  4y_0^2 },
	\end{align*}
	where the final inequality comes from the fact that we know $4t \leq 4 \leq 3y_0^2$.
	We obtain:
	\begin{equation*}
		\mb E\left[ e^{\lambda b_t} | f_t \right] \leq 4^{\lambda Q} y_0^{-\lambda Q - \frac{2\lambda}{\sqrt \kappa}} e^{\frac{\lambda^2 C}{2}} \frac{(y_0 + 4)^{\lambda Q/2}}{(y_0 - 2\sqrt 2)^{3\lambda^2/2}} \left( \Re(f_t(iy_0))^2 + 4y_0^2 \right)^{\lambda^2 + \frac{\lambda}{\sqrt \kappa}}.
	\end{equation*}
	We now take expectations and using \cref{lem:real.bound} ,
	\begin{align}
		\nonumber \mb E[e^{\lambda b_t}] &\leq  C( \lambda, y_0, \kappa)  \mb E \left[ \left( \Re(f_t(iy_0))^2 + 4y_0^2 \right)^{\lambda^2 + \frac{\lambda}{\sqrt \kappa}} \right] \\
		&\nonumber \leq C( \lambda, y_0, \kappa) \mb E \left[ \left( 4\kappa \sup_{s\le t}B_s^2 + 4y_0^2 \right)^{\lambda^2 + \frac{\lambda}{\sqrt \kappa}} \right] \\
		&\leq  C'( \lambda, y_0, \kappa),
		\label{eq:btboundfinal}
	\end{align}
	where the final line is an easy consequence of the reflection principle for Brownian motion. This finishes the proof of \cref{lem:bt}.
\end{proof}

\subsection{Bound on the main term \eqref{mainterm}}\label{subsec:prob}

We now deal with the main term \eqref{mainterm} in the proof of \cref{prop:prob1}.
%The good event that we bound the probability on introduces constraints on the supremum of the driving function of the SLE and a bound on the coupling constant $b_t$ introduced in Corollary \ref{cor:coord}. We show that these constraints both hold with very high probability in Section \ref{sec:coupling.constant}.
Throughout, we need to fix a point $i y_0$ for some $y_0 > 0$ that we use to normalise the Neumann boundary GFF used in the coupling arguments. In order to apply Lemma \ref{lem:maybe} we need to ensure that any complex point that we consider, especially those of the form $f_t(iy)$, have imaginary parts smaller than $y_0$. Happily, we need consider only times $t \in [0,1]$ and the starting points $iy$ with $y \in [0,1]$. Therefore, Lemma \ref{lem:im.bound} guarantees that
\begin{equation}
\label{L:bound_im}	\Im(f_t(iy)) \leq \sqrt{4t+iy} \leq \sqrt 5,
\end{equation}
for all $t\in[0,1]$ and $y\in[0,1]$. Since $2\sqrt{2} > \sqrt{5}$, it will suffice to assume that $y_0 > 2 \sqrt{2}$ for the arguments in this section and in \cref{lem:bt} to hold.

%Furthermore, it will be important in Section \ref{sec:coupling.constant} that
%\begin{equation*}
%y_0 > 2\sqrt2.
%\end{equation*}
%So we fix a point $iy_0$, with
%\begin{equation}
%	y_0 > \max\left( \sqrt 5, 2\sqrt 2\right) = 2\sqrt 2,
%	\label{rs:eq:y0}
%\end{equation}
%which we will use as the pinned point for the remainder of this chapter.

\begin{prop}\label{prop:prob}
%	Let $(f_t)$ be a reverse $SLE_\kappa$ for $\kappa > 0$ and $\kappa \neq 8$, coupled with a Neumann boundary GFF as in Corollary \ref{cor:coord} and let $(b_t)_{t\geq0}$ be the coupling constants. Finally, let $(\xi_t)_{t \geq 0}$ be the driving function of the reverse SLE $f$.
Consider the setup of \cref{prop:prob1}.
	Then for all $\varepsilon > 0$, there exists $\delta > 0$ and $C > 0$ such that, for all $t \in [0,1]$ and $y \in [0,1]$ we have
	\begin{equation*}
		\mb P \left[ |f_t'(iy)| > y^{-(1-\varepsilon)}, \: b_t \leq -\varepsilon \log y, \: \sup_{t \in [0,1]} |\xi_t| \leq y^{-\varepsilon} \right] \leq Cy^{q-\delta}.
	\end{equation*}
\end{prop}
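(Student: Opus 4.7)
The plan is to exploit the coordinate change identity from Corollary \ref{cor:coord}, namely
\begin{equation*}
|f_t'(iy)| = \Bigl(\frac{y}{|f_t(iy)|}\Bigr)^{\!\frac{2}{Q\sqrt{\kappa}}} \exp\Bigl(\frac{1}{Q}(A - B + b_t)\Bigr),
\qquad A := \harm(h')(iy),\ \ B := \harm(h \circ f_t)(iy),
\end{equation*}
together with a decisive use of Koebe's $1/4$ theorem. Since $f_t\colon \mb H \to H_t$ is conformal, Koebe gives $\dist(f_t(iy), \partial H_t) \geq y\,|f_t'(iy)|/4$, and combining with the trivial bound $\dist(f_t(iy), \partial H_t) \leq \Im f_t(iy) \leq |f_t(iy)|$ yields $y/|f_t(iy)| \leq 4/|f_t'(iy)|$. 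Raising the identity to the $\lambda$-th power ($\lambda>0$) and substituting this bound upgrades the exponent of $|f_t'|$ on the left-hand side:
\begin{equation*}
|f_t'(iy)|^{\tilde\lambda} \leq C_\kappa\, \exp\bigl(\lambda(A - B + b_t)/Q\bigr),
\qquad \tilde\lambda := \lambda + \frac{4\lambda}{4+\kappa} = \lambda\cdot\frac{8+\kappa}{4+\kappa},
\end{equation*}
where we used $2/(Q\sqrt\kappa) = 4/(4+\kappa)$. This is the crucial step: it is the gain from $\lambda$ to $\tilde\lambda$ that will eventually turn the naive H\"older exponent $Q^2/4$ into the sharp $q = (8+\kappa)^2/(16\kappa)$.

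I would then apply Markov's inequality on the event $E := \{|f_t'| > y^{-(1-\eps)}\}\cap\{b_t \leq -\eps\log y\}\cap\{\sup_{[0,1]}|\xi|\leq y^{-\eps}\}$, on which $|f_t'|^{\tilde\lambda} \geq y^{-(1-\eps)\tilde\lambda}$ and $e^{\lambda b_t/Q} \leq y^{-\lambda\eps/Q}$, to obtain
\begin{equation*}
\mb P[E] \leq y^{(1-\eps)\tilde\lambda}\,\mb E\bigl[|f_t'|^{\tilde\lambda}\mathbf 1_E\bigr] \leq C\, y^{(1-\eps)\tilde\lambda - \lambda\eps/Q}\,\mb E\bigl[e^{\lambda(A-B)/Q}\mathbf 1_E\bigr].
\end{equation*}
H\"older's inequality with conjugate exponents $1/r+1/s = 1$ then splits the two correlated Gaussian fields: $\mb E[e^{\lambda(A-B)/Q}\mathbf 1_E] \leq \mb E[e^{r\lambda A/Q}]^{1/r}\,\mb E[e^{-s\lambda B/Q}\mathbf 1_E]^{1/s}$. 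The first factor is purely Gaussian: by Corollary \ref{cor:imag.cov}, $\var(A) \leq 2\log(y_0/y)$, so $\mb E[e^{r\lambda A/Q}]^{1/r} \leq C\, y^{-r\lambda^2/Q^2}$. For the second factor I condition on $f_t$; on $E$ the Koebe bound yields $\dist(f_t(iy), \partial H_t) \geq y^{\eps}/4$, and Lemma~\ref{lem:real.bound} together with the $\xi$-bound gives $|\Re f_t(iy)| \leq 2 y^{-\eps}$, so Lemma \ref{lem:maybe} bounds $\var(B\mid f_t) \leq C_1\eps\log(1/y) + C$ on $E$, which integrates to $\mb E[e^{-s\lambda B/Q}\mathbf 1_E]^{1/s} \leq C\, y^{-C_2\, s\eps\, \lambda^2/Q^2}$.

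Taking $r = 1 + \sqrt{\eps}$ (so $s = O(1/\sqrt\eps)$) balances both error contributions at $O(\sqrt\eps)$, and one is left with
\begin{equation*}
\mb P[E] \leq C\, y^{(1-\eps)\tilde\lambda - \lambda^2/Q^2 - O(\sqrt\eps)}.
\end{equation*}
The exponent in $\lambda$ is a downward parabola, maximised at $\lambda^{*} = (1-\eps)Q^2(8+\kappa)/[2(4+\kappa)]$; using $Q^2 = (4+\kappa)^2/(4\kappa)$, its maximum value is
\begin{equation*}
(1-\eps)^2 \cdot \frac{Q^2}{4}\cdot\Bigl(\frac{8+\kappa}{4+\kappa}\Bigr)^{\!2} = (1-\eps)^2 \cdot \frac{(8+\kappa)^2}{16\kappa} = (1-\eps)^2 q.
\end{equation*}
Choosing $\eps$ sufficiently small in terms of $\delta$ ensures $(1-\eps)^2 q - O(\sqrt\eps) \geq q - \delta$, yielding $\mb P[E] \leq C y^{q-\delta}$. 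The main obstacle --- and the step that distinguishes this argument from a naive H\"older/Markov application, which delivers only $Q^2/4 < q$ --- is the Koebe inequality at the outset: trading the factor $(y/|f_t|)^{4/(4+\kappa)}$ in the coupling identity for an extra power of $|f_t'|$ produces precisely the multiplicative gain $(8+\kappa)/(4+\kappa)$ that closes the gap to the sharp Rohde--Schramm exponent.
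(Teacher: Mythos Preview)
Your argument is correct and follows essentially the same route as the paper: the coupling identity from Corollary~\ref{cor:coord}, Koebe's $1/4$ theorem to extract the factor $1+2/(Q\sqrt\kappa)=(8+\kappa)/(4+\kappa)$, H\"older to decouple $\harm(h')(iy)$ from $\harm(h\circ f_t)(iy)$, and the variance bounds of Corollary~\ref{cor:imag.cov} and Lemma~\ref{lem:maybe}, leading to the same quadratic optimisation with maximum $q$. The only cosmetic differences are that the paper uses Koebe together with the event $|f_t'|>y^{-(1-\eps)}$ to bound $y/|f_t(iy)|\le 4y^{1-\eps}$ (rather than your direct substitution $y/|f_t(iy)|\le 4/|f_t'(iy)|$), and runs H\"older with a single exponent $a$ (absorbing your $\lambda$ and $r$ into one parameter, so that the $B$-error is $O(\eps)$ rather than your $O(\sqrt\eps)$); these lead to the identical exponent function $a(1+2/(Q\sqrt\kappa))-a^2/Q^2$.
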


%The argument begins with a slight modification to Markov's inequality. We start with the following lemma.
%%%%%%%%%%%%
\begin{comment}
We start with an elementary application of Koebe's 1/4 theorem.
\begin{lemma}
	We have $|f_t(iy) | \ge (1/4) y |f'_t(iy)|$. In particular, if
 $|f'_t(iy)| > y^{-(1-\varepsilon)}$ then $| f_t(iy) | > y^\eps/4$.
%	we know that $\dist(f_t(iy), \partial H_t) > y^\varepsilon/4$.
	\label{lem:koebe}
\end{lemma}

\begin{proof}
		By Koebe's 1/4 theorem,
	\begin{equation}
		 \dist(f_t(iy), \partial H_t) \ge (1/4) |f_t'(iy)| \dist(iy, \partial \mb H)  = ( | y | /4) | f'_t(iy)|.
		\label{eq:koebe}
	\end{equation}
%	So if $| f'(iy)| > y^{-1- \eps}$ then the left hand side is at least $y^\eps / 4$.
%	Since $\dist(iy, \partial \mb H) = y$, if  $|f'_t(iy)| > y^{-(1-\varepsilon)}$, we also get
%	\begin{equation}
%		|f_t'(iy)| \dist(iy, \partial \mb H) > y^\varepsilon.
%		\label{eq:dist2}
%	\end{equation}
%	Combining the inequalities in \eqref{eq:koebe} and \eqref{eq:dist2} gives the result.
On the other hand, $ \dist(f_t(iy), \partial H_t) \le \dist ( f_t(iy) , \partial \H) = \Im ( f_t(y) ) \le | f_t(iy)|$. The lemma follows.
\end{proof}

%%%%%%%
\begin{comment}
\begin{corollary}
	Fix $\varepsilon > 0$, $t > 0$ and $y > 0$. Let $\{f_s\}_{s>0}$ be the reverse SLE flow as defined by \eqref{eq:reverse}. Then we have that
	\begin{equation*}
		\left\{ |f_t'(iy)| > y^{-(1-\varepsilon)} \right\} \subseteq \left\{ |f_t(iy)| > \frac{1}{4}y^\varepsilon \right\}.
	\end{equation*}
	\label{cor:koebe}
\end{corollary}
\begin{proof}
	Since $H_t \subset \mb H$ we can see that
	\begin{align*}
		\dist(f_t(iy), \partial H_t) &\leq \dist(f_t(iy),  \partial \mb H)\\
		&= \Im(f_t(iy))\\
		&\leq |f_t(iy)|.
	\end{align*}
	Combining this with Lemma \ref{lem:koebe} finishes the proof.
\end{proof}
%\end{comment}
%%%%%%%%%
\end{comment}
%%%%%%%%%%%%%%
We fix $\eps>0$, $ 0 \le t \le 1$, and $0 \le y \le 1$, and introduce the events
$$
\cA = \cA(t, y, \eps) =  \{ |f'_t(iy)| > y^{-(1-\varepsilon)} \}  \cap \{ \sup_{t \in [0,1]}|\xi_t| \leq y^{-\varepsilon} \}
$$
as well as
\begin{equation}\label{d:Gbar}
\bar \cA = \cA \cap \{ b_t \le - \eps \log y\}.
\end{equation}
Hence the goal of \cref{prop:prob} is to control $\P (\bar \cA)$ (note that $\cA$ depends only on the reverse SLE flow). We can use Koebe's 1/4 theorem to get a bound on the variance of $\harm(h \circ f_t)$ on $\cA$:
\begin{lemma}
	Let $(f_s)$ be a reverse $SLE_\kappa$ process with driving function $(\xi_t)$. Let $h$ be an independent Neumann boundary GFF, normalised so that its harmonic part vanishes at the point $iy_0$. Then, for fixed $\varepsilon > 0$ and all $t \in [0,1]$,  and $y \in [0,1]$, if $\cF_t = \sigma(f_s)_{s \le t}$ and $\cF = \cF_1$, on the event $\cA = \cA(t, y , \eps)$:
	\begin{equation*}
		\mb E \left[ \harm(h \circ f_t)(iy)^2 | \cF \right] \leq C' -7\varepsilon\log y,
	\end{equation*}
	where $C'$ is a constant depending on the pinned point $y_0 > \sqrt{5}$ only.
	\label{lem:variance.bound}
\end{lemma}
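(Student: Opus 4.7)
The plan is to reduce the claim to an application of \cref{lem:maybe} after conditioning on the flow. The first step is to observe that by conformal invariance of harmonic extension under $f_t : \H \to H_t$, we have
\[
\harm(h \circ f_t)(iy) = \harmh(h)(f_t(iy)).
\]
Since $h$ is independent of $(f_s)_{s \ge 0}$, conditioning on $\cF$ makes $H_t$ and $f_t(iy)$ deterministic, so \cref{lem:maybe} applies almost surely to give a conditional second-moment bound involving $-3 \log \dist(f_t(iy), \partial H_t)$ and $2 \log(\Re(f_t(iy))^2 + 4 y_0^2)$. The hypothesis that the imaginary part of the evaluation point be less than $y_0$ is automatic from \eqref{L:bound_im}, which gives $\Im(f_t(iy)) \le \sqrt{5} < y_0$ for our choice of $y_0 > 2\sqrt{2}$.

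The remaining task is to control the two terms on the event $\cA$. For the distance term, I would apply Koebe's $1/4$-theorem to $f_t : \H \to H_t$ in the direction that lower-bounds the distance to the boundary of the image: $\dist(f_t(iy), \partial H_t) \ge (y/4) |f_t'(iy)|$. On $\cA$, where $|f_t'(iy)| > y^{-(1-\eps)}$, this gives $\dist(f_t(iy), \partial H_t) > y^\eps / 4$, so the first term contributes at most $-3\eps \log y + O(1)$. For the real-part term, \cref{lem:real.bound} yields $|\Re(f_t(iy))| \le 2 \sup_{s \le t}|\xi_s| \le 2 y^{-\eps}$ on $\cA$, whence $\Re(f_t(iy))^2 + 4 y_0^2 \le C'' y^{-2\eps}$ for some $C''$ depending only on $y_0$, contributing at most $-4\eps \log y + O(1)$ after taking the logarithm.

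Adding the two contributions gives the desired bound $-7\eps \log y + C'$ with $C'$ depending only on $y_0$. No step here is particularly delicate; the whole argument is a short bookkeeping exercise once \cref{lem:maybe} is in hand. The only conceptual point worth flagging is the use of Koebe's theorem in the \emph{opposite} direction to that of \cref{cor:fprime.bound}: there one upper-bounds $|f_t'(iy)|$ by using a known upper bound on the distance to the boundary of $H_t$; here one converts the hypothesis that $|f_t'(iy)|$ is large into a \emph{lower} bound on $\dist(f_t(iy), \partial H_t)$, which is exactly the input required by the variance estimate in \cref{lem:maybe}.
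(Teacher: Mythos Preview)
Your proposal is correct and follows essentially the same route as the paper: conformal invariance to rewrite $\harm(h\circ f_t)(iy)$ as $\harmt(h)(f_t(iy))$, an application of \cref{lem:maybe} after conditioning on $\cF$, then Koebe's $1/4$-theorem together with the event $\cA$ to lower-bound $\dist(f_t(iy),\partial H_t)$ by $y^\eps/4$, and \cref{lem:real.bound} with the bound on $\sup|\xi_s|$ to control the real-part term. Your observation about the direction in which Koebe is used here versus in \cref{cor:fprime.bound} is apt and matches the paper's argument exactly.
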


\begin{proof}
	By conformal invariance of harmonic functions,
	\begin{equation*}
		\harm(h\circ f_t)(\cdot) = \harmt(h)(f_t(\cdot)).
	\end{equation*}
	Since $y_0 > \sqrt{5}$ (recall \eqref{L:bound_im}), we can apply Lemma \ref{lem:maybe} to see that
	\begin{align*}
		\mb E\left[ \harm(h \circ f_t)(iy)^2  | \cF \right] &
	\leq \left( -3\log(\dist(f_t(iy), \partial H_t)) + 2\log(\Re(f_t(iy))^2 + 4y_0^2) + C \right).
	\end{align*}
Now, by Koebe's 1/4 theorem,
	\begin{equation}
		 \dist(f_t(iy), \partial H_t) \ge (1/4) |f_t'(iy)| \dist(iy, \partial \mb H)  = ( | y | /4) | f'_t(iy)|.\label{eq:koebe}
    \end{equation}
    Hence on $\cA$, using Lemma \ref{lem:real.bound}, we get
	\begin{align}\label{eq:complicated}
		\nonumber \mb E\left[ \harm(h \circ f_t)(iy)^2 | \cF \right] & \leq 	
-3\log \left( \frac{y^\varepsilon}{4} \right) + 2 \log\left( 4y^{-2\varepsilon} + 4y_0^2 \right) + C\\
&\leq -3\log ({y^\varepsilon}) + 2\log(8y^{-2\varepsilon}) + 2\log(8y_0^2) + C \nonumber\\
& \leq - 7 \eps \log y + C,
	\end{align}
%	We know that $y^{-2\varepsilon} \geq 1$ and $y_0 > 1$, and so we can see that
%	\begin{equation*}
%		\log(y^{-2\varepsilon} + 4 y_0^2) \leq \log(5y^{-2\varepsilon}) + %\log(5y_0^2).
%	\end{equation*}
%	Therefore, we can simplify the bound in \eqref{eq:complicated} to
%	\begin{equation*}
%		\mb E_h\left[ \harm(h \circ f_t)(iy)^2 \ind_{ \{ |f'_t(iy)| > %y^{-(1-\varepsilon)} \} } \ind_{ \{ \sup_{t \in [0,1]}\xi_t \leq %y^{-\varepsilon} \} } \right] \leq -7 \varepsilon \log y + C',
%	\end{equation*}
where $C$ depends only on $y_0$, as desired.
\end{proof}

\begin{lemma}
Consider the setup of the Liouville quantum gravity coupling in Corollary \ref{cor:coord}, and for $a > 1$ let $b = a/(a-1)$ be its H\"older conjugate.
%
%	Let $(f_s)$ be a reverse $SLE_\kappa$ with driving function $(\xi_s)$. Fix $t \in [0,1]$, $\varepsilon > 0$ and $y > 0$. Let $a > 1$ and $b = a/(a-1)$ be its H\"older conjugate. Let $h$ be a Neumann boundary GFF independent of $(f_s)$ and write $\mb E_h$ for expectation with respect to the law of $h$, conditionally on $(\xi_s)$. Finally, let $h'$ be a Neumann boundary GFF as defined by the coordinate change in Corollary \ref{cor:coord}. Define the event $A$ by
%	\begin{equation*}
%		A := \{|f_t'(iy)| > y^{-(1-\varepsilon)}, \: b_t \leq -\varepsilon \log y, \: \sup_{t \in [0,1]}|\xi_t| \leq y^{-\varepsilon} \}.
%	\end{equation*}
	Then on $\bar \cA$,
	\begin{equation*}
		|f_t'(iy)|^a  \leq  C  y^{\frac{2a}{Q\sqrt \kappa} -  C' \varepsilon} \mb E\left[ \exp\left( \frac{a}{Q}\left( \harm(h')(iy) + b_t \right) \right) | \cF \right],
	\end{equation*}
	where $h = h'$ in law as distributions (they are both normalised to have zero harmonic part at $iy_0$), and the constants $ C,C'$ depend only on $\kappa$, the power $a$ and the pinned point $y_0$ used in the coupling.
	\label{lem:f.bound}
\end{lemma}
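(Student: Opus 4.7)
The starting point is the identity of Corollary~\ref{cor:coord}, which expresses $|f_t'(iy)|$ as the product of the $\cF$-measurable factor $(y/|f_t(iy)|)^{2/(Q\sqrt{\kappa})}$ and $\exp\bigl(\tfrac{1}{Q}(\harm(h')(iy) - \harm(h\circ f_t)(iy) + b_t)\bigr)$. Since the left-hand side is itself $\cF$-measurable, taking conditional expectation of both sides preserves the identity. The plan is then to apply conditional H\"older's inequality with conjugate exponents $a$ and $b = a/(a-1)$ to decouple the contributions of the two correlated harmonic functions---precisely the naive but effective trick highlighted in the introduction.

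More concretely, set
\begin{equation*}
U = \exp\left(\tfrac{1}{Q}(\harm(h')(iy) + b_t)\right), \qquad V = \exp\left(-\tfrac{1}{Q}\harm(h\circ f_t)(iy)\right),
\end{equation*}
so that the identity reads $|f_t'(iy)| = (y/|f_t(iy)|)^{2/(Q\sqrt{\kappa})}\, \E[UV|\cF]$. Conditional H\"older gives $\E[UV|\cF] \le \E[U^a|\cF]^{1/a}\, \E[V^b|\cF]^{1/b}$, and raising to the $a$-th power yields
\begin{equation*}
|f_t'(iy)|^a \le \left(\frac{y}{|f_t(iy)|}\right)^{\frac{2a}{Q\sqrt{\kappa}}} \E[U^a|\cF] \cdot \E[V^b|\cF]^{a/b}.
\end{equation*}
The factor $\E[U^a|\cF]$ is precisely what appears in the lemma's conclusion, so it remains to absorb the prefactor and $\E[V^b|\cF]^{a/b}$ into a single power of $y$ on the event $\cA$ (which contains $\bar\cA$).

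For the prefactor, Koebe's $1/4$ theorem applied to $f_t : \H \to H_t$ gives $|f_t(iy)| \ge (y/4)|f_t'(iy)|$; on $\cA$ this forces $|f_t(iy)| > y^\varepsilon/4$, so $(y/|f_t(iy)|)^{2a/(Q\sqrt{\kappa})} \le C_1\, y^{2a/(Q\sqrt{\kappa}) - C_2\varepsilon}$. For the $\E[V^b|\cF]^{a/b}$ factor, the key observation is that, since $h$ is independent of the reverse flow $(f_t)$ in the coupling and $\harm(h)$ is a centered Gaussian field on $\H$, the random variable $\harm(h\circ f_t)(iy) = \harmt(h)(f_t(iy))$ is a \emph{centered} Gaussian conditional on $\cF$; its conditional variance is controlled on $\cA$ by Lemma~\ref{lem:variance.bound}. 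The Gaussian moment generating function then gives $\E[V^b|\cF] \le C_3\, y^{-C_4 \varepsilon}$, hence $\E[V^b|\cF]^{a/b} \le C_5\, y^{-C_6 \varepsilon}$. Multiplying the two bounds yields the lemma with $C$ and $C'$ depending only on $\kappa$, $a$, and $y_0$.

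The only non-routine point is verifying that the conditional mean of $\harm(h\circ f_t)(iy)$ actually vanishes. This follows from the independence of $h$ and the reverse flow, together with the fact that $\harmt(h)$ is itself centered: writing $h|_{H_t} = h'' + \harmt(h)$ with $h''$ a Dirichlet GFF on $H_t$, and then $\harmt(h) = \harm(h)|_{H_t} + v$ via the Markov decomposition of the Dirichlet part of $h$, every piece has mean zero under our normalization. Once this is checked, the rest is a routine combination of conditional H\"older, the Gaussian moment generating function, and Koebe---notably, no information whatsoever is used about the (intractable) joint law of $\harm(h')$ and $\harm(h\circ f_t)$.
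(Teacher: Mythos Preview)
Your proof is correct and follows essentially the same route as the paper: start from the identity of Corollary~\ref{cor:coord}, apply conditional H\"older to split the two harmonic pieces, control the prefactor via Koebe's $1/4$ theorem on $\cA$, and bound the $V$-factor using the Gaussian moment generating function together with the variance bound of Lemma~\ref{lem:variance.bound}. Your justification that $\harm(h\circ f_t)(iy)$ is centered conditional on $\cF$ is exactly the right point to check.

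The only cosmetic difference is the placement of $b_t$: the paper first uses $b_t \le -\varepsilon\log y$ on $\bar\cA$ to pull $e^{b_t/Q}$ out as a power of $y$ \emph{before} applying H\"older (so its displayed final bound actually carries $\E[\exp(\tfrac{a}{Q}\harm(h')(iy))\mid\cF]$ without the $+b_t$, a harmless discrepancy with the statement), whereas you keep $b_t$ inside $U$ and never invoke the $b_t$-bound at all. Your version therefore proves the inequality on the larger $\cF$-measurable event $\cA$, which is a slight tidying; the subsequent Lemma~\ref{cor:mug} is unaffected either way.
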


\begin{proof}
Recall that by Corollary \ref{cor:coord},
	\begin{equation}
		|f_t'(iy)| =
		\left( \frac{y}{|f_t(iy)|} \right)^{\frac{2}{Q\sqrt \kappa}} \exp\left( \frac{1}{Q}\left( \harm(h')(iy) - \harm(h \circ f_t)(iy) + b_t \right) \right).
		\label{eq:change.A}
	\end{equation}
	We already know from Koebe's 1/4 theorem (see \eqref{eq:koebe}) that
$$
\dist(f_t(iy), \partial H_t) \ge ( | y | /4) | f'_t(iy)| .$$
 On the other hand,
$$ \dist(f_t(iy), \partial H_t) \le \dist ( f_t(iy) , \partial \H) = \Im ( f_t(y) ) \le | f_t(iy)|$$
so that on $\bar \cA$, $|f_t(iy)| \ge y^\eps/4$.
Consequently,
$
		{y}/{|f_t(iy)|} \leq 4y^{1-\varepsilon}.
$	Furthermore by definition of $\bar \cA$, $b_t \leq -\varepsilon \log y$, hence
	\begin{equation}
		\exp\left( \frac{1}{Q}b_t \right)\ind_{\bar \cA} \leq y^{-\varepsilon/Q}.
		\label{eq:b.bound}
	\end{equation}
	Substituting into \eqref{eq:change.A} gives the inequality, on $\bar \cA$:
	\begin{equation*}
		|f'_t(iy)| \leq (4y^{1-\varepsilon})^{\frac{2}{Q\sqrt \kappa}} \cdot y^{-\varepsilon/Q} \exp\left( \frac{1}{Q}(\harm(h')(iy) - \harm(h \circ f_t)(iy)) \right)
	\end{equation*}
	We take the conditional expectation given $\cF$ and use H\"older's inequality (conditionally), taking care where we put the indicator function, and obtain:
	\begin{align}
	|f_t'(iy)| \ind_{\bar \cA} \leq
		&%\hspace{1em}\leq
\left( 4 y^{1-C\varepsilon } \right)^{\frac{2}{Q\sqrt \kappa}} \mb E \left[ \exp\left( \frac{a}{Q}(\harm(h')(iy)) | \cF\right) \right]^{\frac{1}{a}} \mb E \left[ \exp\left( -\frac{b}{Q}\harm(h\circ f_t)(iy) \right) \ind_{\bar \cA} | \cF\right]^{\frac{1}{b}}.
		\label{eq:holder2}
	\end{align}
	Now we know that, conditionally on $\cF$,  $\harm(h\circ f_t)(iy)$ is Gaussian, with a variance that can be bounded by Lemma \ref{lem:variance.bound}. Hence
	\begin{align}
		\mb E \left[ \exp\left( -\frac{b}{Q}\harm(h\circ f_t)(iy) \right) \ind_{\bar \cA} | \cF\right] &\leq
		\exp\left( \frac{b^2}{2Q^2} \cdot \left( C' - 7\varepsilon \log y \right) \right)
		= e^{\frac{b^2 C'}{2Q^2}} y^{-\varepsilon \frac{7b^2}{2Q^2}}.
		\label{eq:4.bound}
	\end{align}
	Substituting \eqref{eq:4.bound} into \eqref{eq:holder2} gives
	\begin{align*}
		|f_t'(iy)| \ind_{\bar \cA}
		&\leq \left(4y^{1-C\varepsilon %\left( 1 + \frac{\sqrt \kappa}{2} \right)
}\right)^{\frac{2}{Q\sqrt \kappa}} e^{\frac{b C'}{2Q^2}} y^{-\varepsilon \frac{7b}{2Q^2}}\mb E \left[ \exp\left( \frac{a}{Q}(\harm(h')(iy) ) \right) | \cF\right]^{\frac{1}{a}}\\
		& = %4^{\frac{2}{Q\sqrt\kappa}} e^{\frac{b C'}{2Q^2}}
C y^{\frac{2}{Q\sqrt \kappa} - C'\varepsilon}\mb E \left[ \exp\left( \frac{a}{Q}(\harm(h')(iy)) \right) | \cF \right]^{\frac{1}{a}}
	\end{align*}
	We finish the proof by raising everything to the power $a$.
\end{proof}

\begin{lemma}\label{cor:mug}
	In the same setting as Lemma \ref{lem:f.bound}, we have for any $a>1$,
	\begin{equation*}
    \mb P \left[ \bar \cA \right]
		\leq C y^{a\left( 1 + \frac{2}{Q\sqrt \kappa} - \frac{a}{Q^2} \right) -  C' \varepsilon},
	\end{equation*}
	where $C, C'$ are constants which depend only on $\kappa$, the H\"older exponent $a$ and the pinned point $iy_0$.
\end{lemma}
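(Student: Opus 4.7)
The strategy is to combine Markov's inequality---leveraging the fact that on $\bar \cA$ we have $|f'_t(iy)|^a > y^{-a(1-\eps)}$---with the deterministic inequality from Lemma~\ref{lem:f.bound} and a Gaussian moment computation. Concretely, since $\bar \cA \subseteq \{|f'_t(iy)|^a > y^{-a(1-\eps)}\}$, Markov's inequality yields
$$\mb P[\bar \cA] \leq y^{a(1-\eps)}\, \mb E\bigl[|f'_t(iy)|^a \ind_{\bar \cA}\bigr].$$

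For the right-hand side, I would then invoke Lemma~\ref{lem:f.bound}, which on $\bar \cA$ dominates $|f'_t(iy)|^a$ by $C y^{2a/(Q\sqrt \kappa) - C'\eps}$ times a conditional expectation of the form $\mb E[\exp((a/Q)\harm(h')(iy))\mid \cF]$ (the contribution of $b_t$ is absorbed into the $y^{-C'\eps}$ factor via the event $\{b_t \leq -\eps \log y\}$ built into $\bar \cA$). Taking outer expectations and applying the tower property removes the conditioning and reduces the problem to estimating the single scalar quantity $\mb E[\exp((a/Q)\harm(h')(iy))]$. Since $h'$ has the distribution of the Neumann GFF $h$ normalised so that its harmonic part vanishes at $iy_0$, the random variable $\harm(h')(iy)$ is a centred Gaussian whose variance is at most $2\log y_0 - 2\log y$ by Corollary~\ref{cor:imag.cov} (which applies because $y \leq 1 < y_0$). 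The standard Gaussian moment generating function identity then gives
$$\mb E\bigl[\exp((a/Q)\harm(h')(iy))\bigr] \leq C(a, y_0, Q)\, y^{-a^2/Q^2}.$$

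Assembling the three bounds produces $\mb P[\bar \cA] \leq C y^{a(1 + 2/(Q\sqrt \kappa) - a/Q^2) - C'' \eps}$, which is exactly the claimed exponent. There is no serious conceptual obstacle here, since all the hard work has already been invested in Lemmas~\ref{lem:f.bound} and~\ref{lem:variance.bound}; the only subtle point worth flagging is that even though the field $h'$ appearing in the coupling is a priori not independent of $\cF$, it is only its \emph{marginal} law that enters the final estimate (thanks to the tower property), so the potential dependence is harmless. The optimisation in $a > 1$, which produces the derivative exponent $q = 4/\kappa + \kappa/16 + 1$ via the choice $a^* = Q^2/2 + Q/\sqrt{\kappa}$, is left to the proof of Proposition~\ref{prop:prob} where this corollary is applied.
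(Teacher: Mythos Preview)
Your proposal is correct and follows essentially the same route as the paper: Markov's inequality, the bound from Lemma~\ref{lem:f.bound}, the tower property to remove the conditioning, and then Corollary~\ref{cor:imag.cov} to control the Gaussian moment of $\harm(h')(iy)$. Your remark about only the marginal law of $h'$ being needed is exactly the point the paper is implicitly using when it passes from the conditional expectation to the unconditional one.
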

\begin{proof}
%	First, recall the definition of the set $A$ from Lemma \ref{lem:f.bound}, that
%	\begin{equation*}
%		A := \{ |f_t'(iy)| > y^{-(1-\varepsilon)}, \: b_t \leq -\varepsilon \log y, \: \sup_{t \in [0,1]} \xi_t \leq y^{-\varepsilon} \}.
%	\end{equation*}
We proceed as in Markov's inequality, using Lemma \ref{lem:f.bound}:
	\begin{align}
		 \P \left[ \bar \cA\right]
%\mb E \left[ \ind_{ \{ |f_t'(iy)| > y^{-(1-\varepsilon)}, \: b_t \leq -\varepsilon \log y, \: \sup_{t \in [0,1]} \xi_t \leq y^{-\varepsilon} \} } \right]\\
		%%
& \nonumber \leq \mb E \left[ y^{a(1-\varepsilon)}|f_t'(iy)|^a \ind_{\bar \cA} \right]\\
& \nonumber \leq \mb E \left[ y^{a(1-\varepsilon)} C y^{\frac{2a}{Q\sqrt \kappa} - C' \varepsilon} \mb E\left[ \exp\left( \frac{a}{Q} \harm(h')(iy) \right) | \cF \right] \right]\\
		&= C y^{a\left( 1 + \frac{2}{Q\sqrt \kappa} \right) - \varepsilon( C' + a)} \mb E \left[ \exp\left( \frac{a}{Q}\harm(h')(iy) \right) \right].
		\label{eq:almost}
	\end{align}
	Now, we use \cref{cor:imag.cov} to bound the variance of $\harm(h')(iy)$ (since $h$ and $h'$ have the same law as normalised distributions), and get
	\begin{align}
\mb E \left[ \exp\left( \frac{a}{Q}\harm(h')(iy) \right) \right]
		&\leq \exp \left( \frac{a^2}{2Q^2}\left( 2\log y_0 - 2\log y \right) \right) =  y_0^{\frac{a^2}{Q^2}} y^{-\frac{a^2}{Q^2}}.
		\label{eq:things}
	\end{align}
	So, substituting \eqref{eq:things} into \eqref{eq:almost} we find
	\begin{equation*}
		\mb P \left[ |f_t'(iy)| > y^{-(1-\varepsilon)}, \: b_t \leq -\varepsilon \log y, \: \sup_{t \in [0,1]} |\xi_t |\leq y^{-\varepsilon} \right] \leq C y^{a\left( 1 + \frac{2}{Q\sqrt \kappa} - \frac{a}{Q^2} \right) - C' \varepsilon},
	\end{equation*}
	as desired.
\end{proof}

To conclude the proof of Proposition \ref{prop:prob} it remains to optimise over $a>1$.

\begin{proof}[Proof of Proposition \ref{prop:prob}]
We focus on the exponent of $y$	in \cref{cor:mug}, which is
\begin{equation}
		f(a) = a\left( 1 + \frac{2}{Q\sqrt \kappa} - \frac{a}{Q^2} \right).
		\label{eq:exponentfinal}
	\end{equation}
	This is a quadratic in $a$, with roots at $a = 0$ and $a = Q^2\left( 1 + \frac{2}{Q\sqrt \kappa} \right)$, and so achieves its maximum at the average of these two, at $a_{\max} = \frac{Q^2}{2}\left( 1 + \frac{2}{Q\sqrt \kappa} \right)$ (note that $Q>2$ so $a_{\max} > 1$.) Substituting $a_{\max}$ into \eqref{eq:exponentfinal} gives
	\begin{equation}
		f(a_{\max}) = a_{\max}\left( 1 + \frac{2}{Q\sqrt \kappa} - \frac{a_{\max}}{Q^2} \right) = \frac{Q^2}{4}\left( 1 + \frac{2}{Q\sqrt \kappa} \right)^2 =  \frac{4}{\kappa} + \frac{\kappa}{16} + 1 = q
		\label{eq:max.exp}
	\end{equation}
after some elementary computations.
%Now note that
%\begin{equation*}
%	Q = \frac{2}{\sqrt \kappa} + \frac{\sqrt \kappa}{2}, \qquad %\frac{Q}{\sqrt \kappa} = \frac{2}{\kappa} + \frac{1}{2}, \qquad Q^2 = %\frac{4}{\kappa} + \frac{\kappa}{4} + 2.
%\end{equation*}
%Therefore, elementary computations show
%\begin{align*}
%	f(a_{\max}) = \frac{Q^2}{4}\left( 1 + \frac{2}{Q \sqrt \kappa} \right)^2 %&= \frac{Q^2}{4}\left( 1 + \frac{4}{Q \sqrt \kappa} + \frac{4}{Q^2 \kappa} \right) \\
%	%%
%	&= \frac{Q^2}{4} + \frac{Q}{\sqrt \kappa} + \frac{1}{\kappa}\\
%	%%
%	&= \frac{1}{\kappa} + \frac{\kappa}{16} + \frac{1}{2} + \frac{2}{\kappa} + %\frac{1}{2} + \frac{1}{\kappa} \\
%	%%
%	&= \frac{4}{\kappa} + \frac{\kappa}{16} + 1 = q,
%\end{align*}
Hence by \cref{cor:mug}, we get
$$
\P( \bar \cA) \le C y^{q - C'\eps}.
$$
%%%%%%%%%%%%%
\begin{comment}
Differentiating to find a minimum, we find
\begin{equation*}
	\frac{\partial}{\partial \kappa}\left( \frac{4}{\kappa} + \frac{\kappa}{16} + 1 \right) = -\frac{4}{\kappa^2} + \frac{1}{16} \qquad \text{and so} \qquad \frac{2}{\kappa} = \frac{1}{4}.
\end{equation*}
Therefore, the exponent is minimised at $\kappa = 8$, at which point the exponent takes the value
\begin{equation*}
	\frac{4}{8} + \frac{8}{16} + 1 = 2.
\end{equation*}

So we see that, for $\kappa \neq 8$, we can choose $\varepsilon > 0$ small enough (depending on $\kappa$ but not $y$) that there exists some $\delta > 0$ such that
\begin{align*}
	\mb P \left[ |f_t'(iy)| > y^{-(1-\varepsilon)}, \: b_t \leq -\varepsilon \log y, \: \sup_{t \in [0,1]} \xi_t \leq y^{-\varepsilon} \right]
	%
	&\leq C y^{a_{max}\left( 1 + \frac{2}{Q \sqrt \kappa} - \frac{a_{max}}{Q^2} \right) - \overline C \varepsilon}  \\
	&= C y^{2 + \delta},
\end{align*}
\end{comment}
%%%%%%%%
which
finishes the proof of \cref{prop:prob1}, and thus \cref{T:mainq}.
\end{proof}

\bibliography{RGM}
\bibliographystyle{plain}
\end{document}